\title[$\boldsymbol{T}$-algebra homomorphisms]{$\boldsymbol{T}$-algebra homomorphisms between rational function semifields of tropical curves}
\author{Song JuAe}
\address{Tokyo Metropolitan University, 1-1 Minami-Ohsawa, Hachioji, Tokyo, 192-0397, Japan.}
\email{songjuae@tmu.ac.jp}
\subjclass[2020]{Primary 14T10; Secondary 14T20}
\keywords{morphisms between tropical curves, $\boldsymbol{T}$-algebra homomorphisms between rational function semifields of tropical curves, chip firing moves on tropical curves}
\newtheorem{dfn}{Definition}[section]
\newtheorem{thm}[dfn]{Theorem}
\newtheorem{prop}[dfn]{Proposition}
\newtheorem{cor}[dfn]{Corollary}
\newtheorem{lemma}[dfn]{Lemma}
\newtheorem{ex}[dfn]{Example}
\newtheorem{cl}[dfn]{Claim}
\def\Gamma{\varGamma}
\begin{document}

\maketitle

\begin{abstract}
We prove that an injective $\boldsymbol{T}$-algebra homomorphism between the rational function semifields of two tropical curves induces a surjective morphism between those tropical curves, where $\boldsymbol{T}$ is the tropical semifield $(\boldsymbol{R} \cup \{ -\infty \}, \operatorname{max}, +)$.
\end{abstract}

\section{Introduction}
	\label{section1}

The main purpose of this paper is to contribute to the construction of an algebraic foundation for abstract tropical geometry;
roughly, tropical geometry is an algebraic geometry over the tropical semifield $\boldsymbol{T} := ( \boldsymbol{R} \cup \{ -\infty \}, \operatorname{max}, +)$.
It has been studied for about two decades.
An operation called \textit{tropicalization} is a basic interest because it maps $n$-dimensional algebraic varieties to $n$-dimensional polyhedral complexes called tropical varieties that capture many important properties of the algebraic varieties.
As research proceeds, abstract tropical geometry that studies abstract tropical varieties appeared and its intrinsic structure also has been drawing interest in recent years.
For the algebraic aspect, a tropical scheme theory is developed in \cite{Giansiracusa=Giansiracusa} and \cite{Maclagan=Rincon} and closely related to it, a tropical Nullstellensatz for congruences is proven in \cite{Bertram=Easton} and \cite{Joo=Mincheva2}.

In \cite{JuAe2}, the author gave an affirmative answer to the question ``For two tropical curves $\Gamma_1$ and $\Gamma_2$, does a $\boldsymbol{T}$-algebra isomorphism $\operatorname{Rat}(\Gamma_1) \to \operatorname{Rat}(\Gamma_2)$ induce an isomorphism (i.e., a finite harmonic morphism of degree one) $\Gamma_2 \to \Gamma_1$?"

In this paper, we consider an answer to the question of $\boldsymbol{T}$-algebra homomorphism version, i.e., the question "For two tropical curves $\Gamma_1$ and $\Gamma_2$, does a $\boldsymbol{T}$-algebra homomorphism $\operatorname{Rat}(\Gamma_1) \to \operatorname{Rat}(\Gamma_2)$ induce a morphism $\Gamma_2 \to \Gamma_1$?":

\begin{thm}
	\label{thm1}
Let $\Gamma_1, \Gamma_2$ be tropical curves.
Let $\psi : \operatorname{Rat}(\Gamma_1) \to \operatorname{Rat}(\Gamma_2)$ be a $\boldsymbol{T}$-homomorphism between their rational function semifields.
If $\psi$ is injective, then there exists a unique surjective morphism $\varphi : \Gamma_2 \twoheadrightarrow \Gamma_1$ such that $\varphi(x^{\prime})=x$ for any $x^{\prime} \in \operatorname{Max}^{\prime}_x$.
\end{thm}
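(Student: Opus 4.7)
The plan is to construct $\varphi$ pointwise from $\psi$ via dual evaluation, and then to upgrade the resulting set map to a morphism of tropical curves. For each $x^{\prime} \in \Gamma_2$ the evaluation $e_{x^{\prime}} : \operatorname{Rat}(\Gamma_2) \to \boldsymbol{T},\ g \mapsto g(x^{\prime})$ is a $\boldsymbol{T}$-algebra homomorphism, so the composite $e_{x^{\prime}} \circ \psi : \operatorname{Rat}(\Gamma_1) \to \boldsymbol{T}$ is also one. Granting that every $\boldsymbol{T}$-algebra homomorphism $\operatorname{Rat}(\Gamma_1) \to \boldsymbol{T}$ is evaluation at a unique point of $\Gamma_1$, I define $\varphi(x^{\prime})$ to be that unique point, which gives the identity $\psi(f)(x^{\prime}) = f(\varphi(x^{\prime}))$ for every $f \in \operatorname{Rat}(\Gamma_1)$ by construction. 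Uniqueness of $\varphi$ is forced by this identity, since any other candidate would induce the same composites $e_{x^{\prime}} \circ \psi$.

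The first technical input is therefore the ``tropical weak Nullstellensatz'' for $\operatorname{Rat}(\Gamma_1)$, identifying $\boldsymbol{T}$-algebra homomorphisms to $\boldsymbol{T}$ with geometric points of $\Gamma_1$. This is exactly the bijection that drove the isomorphism case in \cite{JuAe2}; since only the target $\boldsymbol{T}$ matters here, I would reuse that description directly. The identification $\varphi(x^{\prime}) = x$ for $x^{\prime} \in \operatorname{Max}^{\prime}_x$ then reduces to checking that $\operatorname{Max}^{\prime}_x$ (the set introduced earlier in the paper, which I read as the locus on $\Gamma_2$ where the $\psi$-images of a distinguished family of functions associated with $x$ attain their maximum value) coincides with the $\varphi$-fibre over $x$; this should be immediate once $\varphi$ is defined by the dual-evaluation recipe above.

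Next I would verify that $\varphi : \Gamma_2 \to \Gamma_1$ is a morphism of tropical curves: continuous, piecewise integer affine along each edge, and with nonnegative integer slopes at every point. This is a local statement on $\Gamma_2$, to be extracted by applying $\psi$ to distance-type test functions $f_x \in \operatorname{Rat}(\Gamma_1)$ with controlled slope behaviour near a given $x = \varphi(x^{\prime})$; the piecewise affine, integer-slope behaviour of $\psi(f_x)$ near $x^{\prime}$ transfers to $\varphi$ near $x^{\prime}$, and the compatibility of $\psi$ with $\max$, $+$, and constants is precisely what forces the transferred slopes to be nonnegative. Here the chip firing apparatus flagged in the keywords is the natural bookkeeping device for globalising these local slope counts and checking the balancing condition around each point.

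Finally, surjectivity is where injectivity of $\psi$ is used. If some $x_0 \in \Gamma_1$ lay outside $\varphi(\Gamma_2)$, one could exhibit two distinct elements of $\operatorname{Rat}(\Gamma_1)$ agreeing on a neighbourhood of $\varphi(\Gamma_2)$ but separated at $x_0$; their $\psi$-images would then agree pointwise on $\Gamma_2$, hence as elements of $\operatorname{Rat}(\Gamma_2)$, contradicting injectivity of $\psi$. The main obstacle I anticipate is the middle step: promoting the set map $\varphi$ to a bona fide tropical morphism with the correct nonnegative integer slope data, which is where the chip firing / harmonicity machinery will have to do the real work; the pointwise construction and the surjectivity argument are essentially formal once that upgrade is in place.
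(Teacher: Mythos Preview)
Your dual-evaluation strategy is genuinely different from the paper's route and, if it went through, would be more economical: the paper builds $\varphi$ from the fibre side by proving through a long chain of chip-firing computations (Claims~3.5--3.16) that the sets $\operatorname{Max}^{\prime}_x$ partition $\Gamma_2$, that the resulting map is continuous, and finally that it is a morphism; only afterwards (Corollary~3.18) does it recover the identity $\psi(f)=f\circ\varphi$ that your construction would yield at the outset.

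There are, however, two real gaps. First, evaluation $e_{x^{\prime}}:\operatorname{Rat}(\Gamma_2)\to\boldsymbol{T}$ is \emph{not} defined when $x^{\prime}\in\Gamma_{2\infty}$: rational functions may take the value $+\infty$ at points at infinity, and $+\infty\notin\boldsymbol{T}$. So your recipe only produces $\varphi$ on $\Gamma_2\setminus\Gamma_{2\infty}$, and you still owe an extension to $\Gamma_{2\infty}$ together with a proof that the extension lands in $\Gamma_{1\infty}$ and has the right local form---this is exactly the content of the paper's Claims~3.7, 3.8, 3.12, 3.13, 3.15 and cannot be sidestepped.

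Second, and more seriously, the ``tropical weak Nullstellensatz'' you grant---that every $\boldsymbol{T}$-algebra homomorphism $\operatorname{Rat}(\Gamma_1)\to\boldsymbol{T}$ is evaluation at a unique point of $\Gamma_1$---is not available off the shelf from \cite{JuAe2}. That paper treats \emph{isomorphisms} $\operatorname{Rat}(\Gamma_1)\to\operatorname{Rat}(\Gamma_2)$, not characters to $\boldsymbol{T}$; the techniques transferred here (e.g.\ Lemma~3.4) are the max/min-preservation lemma, not a point-classification of characters. Proving your Nullstellensatz amounts to running the present paper's entire argument in the special case where $\Gamma_2$ is a single point: one must still show, via chip-firing identities, that for a given character $\chi$ there is exactly one $x$ with $\chi(\operatorname{CF}(\{x\},l))=0$, and that $\chi$ agrees with $e_x$ on a generating set of $\operatorname{Rat}(\Gamma_1)$. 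So the work you hope to import is essentially the work the paper is doing; your outline relocates it rather than eliminates it. The upshot is that your plan is a legitimate reorganisation, but the step you label ``essentially formal'' (the Nullstellensatz plus the behaviour at infinity) is in fact where all the content lives.
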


Here $\operatorname{Max}^{\prime}_x$ is defined as the set $\{ x^{\prime} \in \Gamma_2 \,|\, \psi(\operatorname{CF}(\{ x \}, l))(x^{\prime}) = 0\}$ when $x \in \Gamma_1 \setminus \Gamma_{1\infty}$ with $l \in \boldsymbol{R}_{>0} \cup \{ \infty \}$; $\{ x^{\prime} \in \Gamma_2 \,|\, \psi(\operatorname{CF}(\Gamma_1 \setminus (y, x], \infty))^{\odot (-1)}(x^{\prime}) = \infty \}$ when $x \in \Gamma_{1\infty}$ with a finite point $y$ on the unique edge incident to $x$.

Theorem \ref{thm1} has the following corollary:

\begin{cor}
	\label{cor1}
The following categories $\mathscr{C}, \mathscr{D}$ are isomorphic.

$(1)$ The class $\operatorname{Ob}(\mathscr{C})$ of objects  of $\mathscr{C}$ is the tropical curves.

For $\Gamma_1, \Gamma_2 \in \operatorname{Ob}(\mathscr{C})$, the set $\operatorname{Hom}_{\mathscr{C}}(\Gamma_1, \Gamma_2)$ of morphisms from $\Gamma_1$ to $\Gamma_2$ consists of all of the injective $\boldsymbol{T}$-algebra homomorphisms $\operatorname{Rat}(\Gamma_1) \hookrightarrow \operatorname{Rat}(\Gamma_2)$.

$(2)$ The class $\operatorname{Ob}(\mathscr{D})$ of objects of $\mathscr{D}$ is the tropical curves.

For $\Gamma_1, \Gamma_2 \in \operatorname{Ob}(\mathscr{D})$, the set $\operatorname{Hom}_{\mathscr{D}}(\Gamma_1, \Gamma_2)$ of morphisms from $\Gamma_1$ to $\Gamma_2$ consists of all of the surjective morphisms $\Gamma_2 \twoheadrightarrow \Gamma_1$.
\end{cor}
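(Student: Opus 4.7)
The plan is to exhibit mutually inverse functors $F : \mathscr{C} \to \mathscr{D}$ and $G : \mathscr{D} \to \mathscr{C}$, both the identity on objects. On morphisms I define $F$ by sending an injective $\boldsymbol{T}$-algebra homomorphism $\psi : \operatorname{Rat}(\Gamma_1) \hookrightarrow \operatorname{Rat}(\Gamma_2)$ to the unique surjective morphism $\varphi : \Gamma_2 \twoheadrightarrow \Gamma_1$ supplied by Theorem \ref{thm1}, and $G$ by sending a surjective morphism $\varphi : \Gamma_2 \twoheadrightarrow \Gamma_1$ to its pullback $\varphi^{\ast} : \operatorname{Rat}(\Gamma_1) \to \operatorname{Rat}(\Gamma_2)$, $f \mapsto f \circ \varphi$.

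First I would check that $G$ is well-defined: a rational function on $\Gamma_1$ pulls back along a morphism of tropical curves to a rational function on $\Gamma_2$; composition with $\varphi$ commutes with $\max$, $+$, and constants, so $\varphi^{\ast}$ is a $\boldsymbol{T}$-algebra homomorphism; and it is injective because surjectivity of $\varphi$ guarantees that any two rational functions differing somewhere on $\Gamma_1$ still differ after pullback. The central step, which I expect to carry the weight of the proof, is the pointwise identity $\psi = \varphi^{\ast}$ whenever $\psi$ and $\varphi$ correspond under Theorem \ref{thm1}. The direction $F \circ G = \operatorname{id}_{\mathscr{D}}$ is the more tractable one: starting from a surjective $\varphi$, the uniqueness clause of Theorem \ref{thm1} reduces the equality $F(\varphi^{\ast}) = \varphi$ to verifying that $\varphi(x^{\prime}) = x$ for every $x^{\prime} \in \operatorname{Max}^{\prime}_x$ associated to $\varphi^{\ast}$, and this is a direct evaluation of pullbacks of the chip-firing functions that define $\operatorname{Max}^{\prime}_x$.

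The converse $G \circ F = \operatorname{id}_{\mathscr{C}}$ is the main obstacle: for an injective $\psi$ with associated $\varphi$, one must promote the defining condition $\psi(\operatorname{CF}(\{ x \}, l))(x^{\prime}) = 0$ on chip-firing functions at single points to the full identity $\psi(f)(x^{\prime}) = f(\varphi(x^{\prime}))$ for every $f \in \operatorname{Rat}(\Gamma_1)$ and every $x^{\prime} \in \Gamma_2$. I would expect to obtain this as a by-product of the construction of $\varphi$ in the proof of Theorem \ref{thm1}, together with the observation that every element of $\operatorname{Rat}(\Gamma_1)$ is built from chip-firing functions using the semifield operations, which $\psi$ preserves. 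Once the bijection on hom-sets is secured, functoriality is formal: identities correspond, and for composable $\psi_1, \psi_2$ corresponding to $\varphi_1, \varphi_2$, the identity $\psi_i = \varphi_i^{\ast}$ gives $\psi_2 \circ \psi_1 = \varphi_2^{\ast} \circ \varphi_1^{\ast} = (\varphi_1 \circ \varphi_2)^{\ast}$, so $F(\psi_2 \circ \psi_1) = \varphi_1 \circ \varphi_2$, which is exactly the composition in $\mathscr{D}$ of $F(\psi_1)$ and $F(\psi_2)$; the argument for $G$ is identical.
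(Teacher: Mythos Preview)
Your proposal is correct and follows essentially the same route as the paper: define mutually inverse (contravariant) functors $F$ and $G$ by $\psi \mapsto \varphi$ (via Theorem~\ref{thm1}) and $\varphi \mapsto \varphi^{\ast}$, use the uniqueness clause for $F \circ G = \operatorname{id}_{\mathscr{D}}$, and use the identity $\psi = \varphi^{\ast}$ for $G \circ F = \operatorname{id}_{\mathscr{C}}$. The paper isolates this last identity as a separate statement (Corollary~\ref{cor5}) and proves it exactly as you anticipate, by checking it on a generating set of chip-firing moves and invoking finite generation of $\operatorname{Rat}(\Gamma_1)$ over $\boldsymbol{T}$; the only point you may be underestimating is that verifying $\psi(\operatorname{CF}(\{x\},\varepsilon)) = \operatorname{CF}(\{x\},\varepsilon) \circ \varphi$ for the \emph{specific} $\varepsilon$'s appearing in the generating set requires some additional technical control (Lemmas~\ref{lem7} and~\ref{lemma9} in the paper), not merely the claims already inside the proof of Theorem~\ref{thm1}.
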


The rest of this paper is structured as follows.
In Section \ref{section2}, we give the definitions of semirings and algebras, tropical curves, rational functions and chip firing moves on tropical curves, and morphisms between tropical curves.
Section \ref{section3} contains the proofs of the assertions above.

\section*{Acknowledgements}
The author thanks my supervisor Masanori Kobayashi and Yasuhito Nakajima for their helpful comments.

\section{Preliminaries}
	\label{section2}

In this section, we recall several definitions which we need later.
We refer to \cite{Golan} (resp. \cite{Maclagan=Sturmfels}) for an introduction to the theory of semirings (resp. tropical geometry) and employ definitions in \cite{Jun} (resp. \cite{JuAe3}) related to semirings (resp. tropical curves).
The definition of morphisms between tropical curves we employ in Subsection \ref{subsection2.4} is given in \cite{Chan}.
Today it is usual that we assume a morphism between tropical curves to be (finite) harmonic (cf. \cite{Chan}, \cite{JuAe3}).
However, since the pull-back of a morphism between tropical curves is a $\boldsymbol{T}$-algebra homomorphism between the rational function semifields of these tropical curves (see the beginning of Section \ref{section3}), in our setting, it is natural to employ Chan's definition of morphisms between tropical curves in \cite{Chan}.

\subsection{Semirings and algebras}
	\label{subsection2.1}

In this paper, a \textit{semiring} is a commutative semiring with the absorbing neutral element $0$ for addition and the identity $1$ for multiplication.
If every nonzero element of a semiring $S$ is multiplicatively invertible and $0 \not= 1$, then $S$ is called a \textit{semifield}.

A map $\varphi : S_1 \to S_2$ between semirings is a \textit{semiring homomorphism} if for any $x, y \in S_1$,
\begin{align*}
\varphi(x + y) = \varphi(x) + \varphi(y), \	\varphi(x \cdot y) = \varphi(x) \cdot \varphi(y), \	\varphi(0) = 0, \	\text{and}\	\varphi(1) = 1.
\end{align*}

Given a semiring homomorphism $\varphi : S_1 \to S_2$, we call the pair $(S_2, \varphi)$ (for short, $S_2$) a \textit{$S_1$-algebra}.
For a semiring $S_1$, a map $\psi : (S_2, \varphi) \to (S_2^{\prime}, \varphi^{\prime})$ between $S_1$-algebras is a \textit{$S_1$-algebra homomorphism} if $\psi$ is a semiring homomorphism and $\varphi^{\prime} = \psi \circ \varphi$.
When there is no confusion, we write $\psi : S_2 \to S_2^{\prime}$ simply.

The set $\boldsymbol{T} := \boldsymbol{R} \cup \{ -\infty \}$ with two tropical operations:
\begin{align*}
a \oplus b := \operatorname{max}\{ a, b \} \quad	\text{and} \quad a \odot b := a + b,
\end{align*}
where $a, b \in \boldsymbol{T}$, becomes a semifield.
Here, for any $a \in \boldsymbol{T}$, we handle $-\infty$ as follows:
\begin{align*}
a \oplus (-\infty) = (-\infty) \oplus a = a \quad \text{and} \quad a \odot (-\infty) = (-\infty) \odot a = -\infty.
\end{align*}
$\boldsymbol{T}$ is called the \textit{tropical semifield}.
$\boldsymbol{B} := (\{ 0, -\infty \}, \operatorname{max}, +)$ is a subsemifield of $\boldsymbol{T}$ called the \textit{boolean semifield}.

\subsection{Tropical curves}
	\label{subsection2.2}

In this paper, a \textit{graph} is an unweighted, undirected, finite, connected nonempty multigraph that may have loops.
For a graph $G$, the set of vertices is denoted by $V(G)$ and the set of edges by $E(G)$.
A vertex $v$ of $G$ is a \textit{leaf end} if $v$ is incident to only one edge and this edge is not loop.
A \textit{leaf edge} is an edge of $G$ incident to a leaf end.

A \textit{tropical curve} is the underlying topological space of the pair $(G, l)$ of a graph $G$ and a function $l: E(G) \to {\boldsymbol{R}}_{>0} \cup \{\infty\}$, where $l$ can take the value $\infty$ only on leaf edges, together with an identification of each edge $e$ of $G$ with the closed interval $[0, l(e)]$.
The interval $[0, \infty]$ is the one point compactification of the interval $[0, \infty)$.
We regard $[0, \infty]$ not just as a topological space but as almost a metric space.
The distance between $\infty$ and any other point is infinite.
When $l(e)=\infty$, the leaf end of $e$ must be identified with $\infty$.
If $E(G) = \{ e \}$ and $l(e)=\infty$, then we can identify either leaf ends of $e$ with $\infty$.
When a tropical curve $\Gamma$ is obtained from $(G, l)$, the pair $(G, l)$ is called a \textit{model} for $\Gamma$.
There are many possible models for $\Gamma$.
We frequently identify a vertex (resp. an edge) of $G$ with the corresponding point (resp. the corresponding closed subset) of $\Gamma$.
A model $(G, l)$ is \textit{loopless} if $G$ is loopless.
For a point $x$ of a tropical curve $\Gamma$, if $x$ is identified with $\infty$, then $x$ is called a \textit{point at infinity}, else, $x$ is called a \textit{finite point}.
$\Gamma_{\infty}$ denotes the set of all points at infinity of $\Gamma$.
If $\Gamma_{\infty}$ is empry, then $\Gamma$ is called a \textit{metric graph}.
If $x$ is a finite point, then the \textit{valence} $\operatorname{val}(x)$ is the number of connected components of $U \setminus \{ x \}$ with any sufficiently small connected neighborhood $U$ of $x$; if $x$ is a point at infinity, then $\operatorname{val}(x) := 1$.
We construct a model $(G_{\circ}, l_{\circ})$ called the {\it canonical model} for $\Gamma$ as follows.
Generally, we define $V(G_{\circ}) := \{ x \in \Gamma \,|\, \operatorname{val}(x) \not= 2 \}$ except for the following two cases.
When $\Gamma$ is homeomorphic to a circle $S^1$, we define $V(G_{\circ})$ as the set consisting of one arbitrary point of $\Gamma$.
When $\Gamma$ has the pair $(T, l)$ as its model, where $T$ is a tree consisting of three vertices and two edges and $l(E(T)) = \{ \infty \}$, we define $V(G_{\circ})$ as the set of two points at infinity and any finite point of $\Gamma$.
The union of $V(G_{\circ})$ and the set of the midpoints of all loops of $G_{\circ}$ defines the \textit{canonical loopless model} for $\Gamma$.
For a point $x$ of $\Gamma$, a \textit{half-edge} of $x$ is a connected component of $U \setminus \{ x \}$ with any connected neighborhood $U$ of $x$ which consists of only two-valent points and $x$.
The word ``an edge of $\Gamma$" means an edge of $G_{\circ}$.

\subsection{Rational functions and chip firing moves}
	\label{subsection2.3}

Let $\Gamma$ be a tropical curve.
A continuous map $f : \Gamma \to \boldsymbol{R} \cup \{ \pm \infty \}$ is a \textit{rational function} on $\Gamma$ if $f$ is a constant function of $-\infty$ or a piecewise affine function with integer slopes, with a finite number of pieces and that can take the values $\pm \infty$ at only points at infinity.
For a point $x$ of $\Gamma$ and a rational function $f \in \operatorname{Rat}(\Gamma) \setminus \{ -\infty \}$, $x$ is a \textit{zero} (resp. \textit{pole}) of $f$ if the sign of the sum of outgoing slopes of $f$ at $x$ is positive (resp. negative).
If $x$ is a point at infinity, then we regard the outgoing slope of $f$ at $x$ as the slope of $f$ from $y$ to $x$ times minus one, where $y$ is a finite point on the leaf edge incident to $x$ such that $f$ has a constant slope on the interval $(y, x)$.
$\operatorname{Rat}(\Gamma)$ denotes the set of all rational functions on $\Gamma$.
For rational functions $f, g \in \operatorname{Rat}(\Gamma)$ and a point $x \in \Gamma \setminus \Gamma_{\infty}$, we define
\begin{align*}
(f \oplus g) (x) := \operatorname{max}\{f(x), g(x)\} \quad \text{and} \quad (f \odot g) (x) := f(x) + g(x).
\end{align*}
We extend $f \oplus g$ and $f \odot g$ to points at infinity to be continuous on the whole of $\Gamma$.
Then both are rational functions on $\Gamma$.
Note that for any $f \in \operatorname{Rat}(\Gamma)$, we have
\begin{align*}
f \oplus (-\infty) = (-\infty) \oplus f = f
\end{align*}
and
\begin{align*}
f \odot (-\infty) = (-\infty) \odot f = -\infty.
\end{align*}
Then $\operatorname{Rat}(\Gamma)$ becomes a semifield with these two operations.
Also, $\operatorname{Rat}(\Gamma)$ becomes a $\boldsymbol{T}$-algebra with the natural inclusion $\boldsymbol{T} \hookrightarrow \operatorname{Rat}(\Gamma)$.
Note that for $f, g \in \operatorname{Rat}(\Gamma)$, $f = g$ means that $f(x) = g(x)$ for any $x \in \Gamma$.

Let $\Gamma_1$ be a closed subset of a tropical curve $\Gamma$ which has a finite number of connected components and no connected components consisting of only a point at infinity, and $l$ a positive number or infinity.
The \textit{chip firing move} by $\Gamma_1$ and $l$ is defined as the rational function $\operatorname{CF}(\Gamma_1, l)(x) := - \operatorname{min}\{ \operatorname{dist}(\Gamma_1, x), l \}$ with $x \in \Gamma$, where $\operatorname{dist}(\Gamma_1, x)$ denotes the distance between $\Gamma_1$ and $x$.

\subsection{Morphisms between tropical curves}
	\label{subsection2.4}

Let $\varphi : \Gamma \to \Gamma^{\prime}$ be a continuous map between tropical curves.
$\varphi$ is a \textit{morphism} if there exist loopless models $(G, l)$ and $(G^{\prime}, l^{\prime})$ for $\Gamma$ and $\Gamma^{\prime}$, respectively, such that $\varphi$ can be regarded as a map $V(G) \cup E(G) \to V(G^{\prime}) \cup E(G^{\prime})$ satisfying $\varphi(V(G)) \subset V(G^{\prime})$ and for $e \in \varphi(E(G))$, there exists a nonnegative integer $\operatorname{deg}_e(\varphi)$ such that for any points $x, y$ of $e$, $\operatorname{dist}_{\varphi(e)}(\varphi (x), \varphi (y)) = \operatorname{deg}_e(\varphi) \cdot \operatorname{dist}_e(x, y)$, where $\operatorname{dist}_{\varphi(e)}(\varphi(x), \varphi(y))$ denotes the distance between $\varphi(x)$ and $\varphi(y)$ in $\varphi(e)$.

\section{Main results}
	\label{section3}

In this section, we will prove Theorem \ref{thm1} and Corollary \ref{cor1}.

We first check that the converse of Theorem \ref{thm1} holds:

\begin{prop}
	\label{prop}
Let $\Gamma_1, \Gamma_2$ be tropical curves.
If $\varphi : \Gamma_2 \to \Gamma_1$ is a surjective morphism, then the pull-back $\varphi^{\ast} : \operatorname{Rat}(\Gamma_1) \to \operatorname{Rat}(\Gamma_2); f \mapsto f \circ \varphi$ is an injective $\boldsymbol{T}$-algebra homomorphism.
\end{prop}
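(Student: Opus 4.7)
The plan is to verify in order the four parts packaged into the statement: that $\varphi^{\ast}$ is well defined, i.e., $f \circ \varphi \in \operatorname{Rat}(\Gamma_2)$ for every $f \in \operatorname{Rat}(\Gamma_1)$; that it preserves $\oplus$, $\odot$, and the constants $-\infty$ and $0$; that it is compatible with the distinguished inclusions $\boldsymbol{T} \hookrightarrow \operatorname{Rat}(\Gamma_i)$; and finally that surjectivity of $\varphi$ forces $\varphi^{\ast}$ to be injective. Of these, well-definedness is the substantive point and the rest are essentially formal.

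For well-definedness, the case $f = -\infty$ is trivial. Otherwise I would begin with loopless models $(G, l)$ for $\Gamma_2$ and $(G^{\prime}, l^{\prime})$ for $\Gamma_1$ witnessing that $\varphi$ is a morphism, refine $(G^{\prime}, l^{\prime})$ so that $f$ is affine with integer slope on each edge, and pull this refinement back through $\varphi$ to a compatible refinement of $(G, l)$. On each edge $e$ of the refined model of $\Gamma_2$ not contracted by $\varphi$, the morphism identity $\operatorname{dist}_{\varphi(e)}(\varphi(x), \varphi(y)) = \operatorname{deg}_e(\varphi) \cdot \operatorname{dist}_e(x, y)$ makes $\varphi|_e$ an affine parametrization with integer dilation factor $\operatorname{deg}_e(\varphi)$, so $f \circ \varphi$ restricts to an affine function on $e$ whose slope is a product of two integers; on contracted edges $f \circ \varphi$ is constant. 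Continuity of $f \circ \varphi$ is automatic, and the refined model is still finite, so the number of pieces is finite. For the values $\pm \infty$ condition, I would observe that any leaf edge $e$ of $\Gamma_2$ with $l(e) = \infty$ and $\operatorname{deg}_e(\varphi) \geq 1$ must be mapped onto a leaf edge of $\Gamma_1$ of infinite length (otherwise the dilation identity forces an infinite distance inside an image edge of finite length), so the point at infinity of $e$ goes to a point at infinity of $\Gamma_1$; contracted infinite leaf edges land on a single finite point of $\Gamma_1$, where $f$ takes only a finite value. Thus $f \circ \varphi$ can attain the values $\pm \infty$ only at points of $\Gamma_{2 \infty}$.

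The semiring identities $(f \oplus g) \circ \varphi = (f \circ \varphi) \oplus (g \circ \varphi)$ and $(f \odot g) \circ \varphi = (f \circ \varphi) \odot (g \circ \varphi)$, together with preservation of the constants $-\infty$ and $0$, reduce to pointwise checks on $\Gamma_2 \setminus \Gamma_{2 \infty}$ that extend to $\Gamma_{2 \infty}$ by the same continuous extension that defines $\oplus, \odot$ on $\operatorname{Rat}$. The $\boldsymbol{T}$-algebra compatibility is immediate since a constant function $a \in \boldsymbol{T}$ on $\Gamma_1$ pulls back to the constant function $a$ on $\Gamma_2$. For injectivity, if $f \circ \varphi = g \circ \varphi$ with $f, g \in \operatorname{Rat}(\Gamma_1)$, then surjectivity of $\varphi$ says every $x \in \Gamma_1$ has the form $\varphi(x^{\prime})$, whence $f(x) = f(\varphi(x^{\prime})) = g(\varphi(x^{\prime})) = g(x)$; this is the only place where surjectivity enters. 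I expect the main obstacle to be the bookkeeping inside the well-definedness step, particularly the simultaneous handling of edges contracted by $\varphi$ and of infinite leaf edges.
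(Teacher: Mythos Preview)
Your proposal is correct and follows the same overall structure as the paper's proof: establish well-definedness of $\varphi^{\ast}$, note that the $\boldsymbol{T}$-algebra homomorphism conditions are formal, and deduce injectivity from surjectivity of $\varphi$ via a pointwise argument. The paper's proof is considerably terser than yours---it dispatches well-definedness in a single sentence (``Since $\varphi$ is a morphism, for any $f \in \operatorname{Rat}(\Gamma_1)$, $f \circ \varphi$ is a rational function on $\Gamma_2$'') and declares the homomorphism property to hold ``by definition''---whereas you actually unpack the model refinement and the integer-slope and $\pm\infty$ bookkeeping that justify this; your injectivity argument is the contrapositive of the paper's but otherwise identical.
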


\begin{proof}
Since $\varphi$ is a morphism, for any $f \in \operatorname{Rat}(\Gamma_1)$, $f \circ \varphi$ is a rational function on $\Gamma_2$.
By definition, $\varphi^{\ast}$ is a $\boldsymbol{T}$-algebra homomorphism.
For $f, g \in \operatorname{Rat}(\Gamma_1)$, if $f \not= g$, then there exists $x \in \Gamma_1$ such that $f(x) \not= g(x)$.
Since $\varphi$ is surjective, there exists $x^{\prime} \in \Gamma_2$ such that $x = \varphi(x^{\prime})$.
Hence we have
\begin{align*}
\varphi^{\ast}(f) (x^{\prime}) =&~ (f \circ \varphi) (x^{\prime})\\
=&~ f(\varphi(x^{\prime}))\\
=&~ f(x)\\
\not=&~ g(x)\\
=&~ g(\varphi(x^{\prime}))\\
=&~ (g \circ \varphi)(x^{\prime})\\
=&~ \varphi^{\ast}(g)(x^{\prime}).
\end{align*}
Thus $\varphi^{\ast}$ is injective.
\end{proof}

By the following examples, we know that, in general, a semiring homomorphism between semifields may not be injective.

\begin{ex}
	\label{ex1}
\upshape{
The correspondence $\boldsymbol{T} \to \boldsymbol{B}; -\infty \not= t \mapsto 0; -\infty \mapsto -\infty$ is a noninjective semiring homomorphism.
}
\end{ex}

\begin{ex}
	\label{ex2}
\upshape{
Let $\Gamma := [0, 2]$ and $\Gamma_1 := [0, 1] \subset \Gamma$.
The natural inclusion $\iota : \Gamma_1 \hookrightarrow \Gamma$ is a nonsurjective morphism and the pull-back $\iota^{\ast} : \operatorname{Rat}(\Gamma) \to \operatorname{Rat}(\Gamma_1); f \mapsto f \circ \iota$ is the restriction map $f \mapsto f|_{\Gamma1}$ and is a surjective $\boldsymbol{T}$-algebra homomorphism which is not injective.
}
\end{ex}

By Example \ref{ex2}, we also know that a morphism between tropical curves may not be surjective and to consider the condition ``$\psi$ is injective" is fundamental.
For more details on morphisms between tropical curves, see \cite{Chan}. 

Let $\Gamma_1, \Gamma_2$ be tropical curves and $\psi : \operatorname{Rat}(\Gamma_1) \to \operatorname{Rat}(\Gamma_2)$ an injective $\boldsymbol{T}$-algebra homomorphism.
The following lemma is easy but fundamental and is proven in the same way as the proof of \cite[Lemma 3.7]{JuAe2}.
Here we put its proof for readability.

\begin{lemma}
	\label{lem4}
For any $f \in \operatorname{Rat}(\Gamma_1)$, the following hold:

$(1)$ $\operatorname{max}\{ f(x) \,|\, x \in \Gamma_1 \} = \operatorname{max}\{ \psi(f)(x^{\prime}) \,|\, x^{\prime} \in \Gamma_2 \}$, and

$(2)$ $\operatorname{min}\{ f(x) \,|\, x \in \Gamma_1 \} = \operatorname{min}\{ \psi(f)(x^{\prime}) \,|\, x^{\prime} \in \Gamma_2 \}$.
\end{lemma}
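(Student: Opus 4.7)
My plan is to exploit the two hypotheses on $\psi$ in tandem: since $\psi$ is a $\boldsymbol{T}$-algebra homomorphism, it commutes with $\oplus$ and restricts to the identity on the constant subsemifield $\boldsymbol{T} \subset \operatorname{Rat}(\Gamma_i)$, while injectivity allows any equality $\psi(g) = \psi(h)$ in $\operatorname{Rat}(\Gamma_2)$ to be pulled back to $g = h$ in $\operatorname{Rat}(\Gamma_1)$. In each of (1) and (2), the first property will yield one inequality between the extrema, and the second will yield the reverse.

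For (1), I would set $M := \max_{\Gamma_1} f$ and $M' := \max_{\Gamma_2} \psi(f)$. The degenerate case $M = -\infty$ forces $f = -\infty$, whence $\psi(f) = -\infty$ and $M' = -\infty$, so I may assume $M \in \boldsymbol{R} \cup \{+\infty\}$. When $M$ is a real number, the pointwise bound $f \leq M$ reads as the identity $f \oplus M = M$ in $\operatorname{Rat}(\Gamma_1)$; applying $\psi$ and using $\psi(M) = M$ gives $\psi(f) \oplus M = M$, hence $M' \leq M$. For the reverse inequality I would first observe that $M' > -\infty$ (otherwise $\psi(f) = \psi(-\infty)$ and injectivity collapses $f$ to $-\infty$), and then suppose for contradiction that $M' < M$ with $M' \in \boldsymbol{R}$: the inequality $\psi(f) \leq M'$ rewrites as $\psi(f \oplus M') = \psi(M')$, so injectivity forces $f \oplus M' = M'$, hence $M \leq M'$, contradicting the assumption. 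The case $M = +\infty$ follows from the same contradiction, since any finite upper bound $M'$ on $\psi(f)$ would be pulled back to a finite upper bound on $f$.

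For (2) I would run the parallel argument with $\min$ in place of $\max$: starting from $f \oplus m = f$ where $m := \min_{\Gamma_1} f$, applying $\psi$ gives $\psi(f) \oplus m = \psi(f)$ and hence $\min_{\Gamma_2} \psi(f) \geq m$; and applying injectivity to $\psi(f \oplus m') = \psi(f)$ with $m' := \min_{\Gamma_2} \psi(f)$ yields $f \oplus m' = f$, so $m \geq m'$. No multiplicative inverse is required.

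I do not expect any genuine obstacle. The only mildly delicate point is the bookkeeping at the boundary values $\pm\infty$, since $+\infty$ is not itself an element of $\boldsymbol{T}$; however, whenever the supremum or infimum attains $\pm\infty$, the contradiction is routed through a real candidate bound on the opposite side, which is exactly what the argument above does. This mirrors the reasoning of \cite[Lemma 3.7]{JuAe2} cited by the author.
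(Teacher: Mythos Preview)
Your argument is correct and, for part~(1), coincides with the paper's proof: both characterise the maximum $a$ via the condition $f \oplus b = b \Leftrightarrow b \ge a$, push this through $\psi$ using that $\psi$ fixes constants, and invoke injectivity for the strict case. The only difference is in part~(2): the paper reduces to~(1) by applying it to $f^{\odot(-1)} = -f$ and using $\psi(f^{\odot(-1)}) = \psi(f)^{\odot(-1)}$, whereas you argue directly from the dual characterisation $f \oplus m = f \Leftrightarrow m \le \min f$. Your variant is marginally more elementary in that it never uses the multiplicative inverse (so it would go through for semiring rather than semifield targets), but the mechanism---translate an extremum into a $\oplus$-identity with a constant, transport it by $\psi$, and use injectivity for the reverse inequality---is identical.
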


\begin{proof}
If $f \in \boldsymbol{T}$, the assertions are clear.

Assume that $f$ is not a constant function.
Let $a$ be the maximum value of $f$.
In this case, $a$ is in $\boldsymbol{R} \cup \{ \infty \}$.

Assume $a \in \boldsymbol{R}$.
For $b \in \boldsymbol{R}$, we have
\begin{align*}
f \oplus b
\begin{cases}
= b \quad \text{if } b \ge a,\\
\not= b \quad \text{if } b < a.
\end{cases}
\end{align*}
Therefore we have
\begin{align*}
\psi(f) \oplus b = \psi(f) \oplus \psi(b) = \psi(f \oplus b)\\
\begin{cases}
= \psi(b) = b \	\	\text{if } b \ge a,\\
\not= \psi(b) = b \	\	\text{if } b < a.
\end{cases}
\end{align*}
Thus the maximum value of $\psi(f)$ is $a$.

Assume $a = \infty$.
Then for any $t \in \boldsymbol{T}$, we have $f \oplus t \not= t$.
Thus 
\begin{align*}
\psi(f) \oplus t = \psi(f) \oplus \psi(t) = \psi(f \oplus t) \not= \psi(t) = t
\end{align*}
hold.
This means that the maximum value of $\psi(f)$ is $\infty$.

For the minimum values of $f$ and $\psi(f)$, we can obtain the conclusion by applying the maximum value case for $f^{\odot (-1)} = -f$ and $\psi(f^{\odot (-1)}) = -\psi(f)$ since 
\begin{align*}
\operatorname{min}\{ f(x) \,|\, x \in \Gamma_1 \} = - \operatorname{max}\{ -f(x) \,|\, x \in \Gamma_1 \}
\end{align*}
and 
\[
\operatorname{min}\{ \psi(f)(x^{\prime}) \,|\, x^{\prime} \in \Gamma_2 \} = - \operatorname{max}\{ -\psi(f)(x^{\prime}) \,|\, x^{\prime} \in \Gamma_2 \}. \qedhere
\]
\end{proof}

Now we start to prove Theorem \ref{thm1}.
The proof is broken into several steps.
The main idea to construct the map $\varphi : \Gamma_2 \to \Gamma_1$ is that we extract the imformation of the ``fibre" of $x \in \Gamma_1$ from a rational function on $\Gamma_2$ of the form of $\psi(\operatorname{CF}(\{ x \}, l))$ when $x \in \Gamma_1 \setminus \Gamma_{1\infty}$ with $l \in \boldsymbol{R}_{>0} \cup \{ \infty \}$ or $\psi(\operatorname{CF}(\Gamma_1 \setminus (y, x], \infty))^{\odot(-1)}$ when $x \in \Gamma_{1\infty}$ with a finite point $y$ on the unique edge incident to $x$.
Note that a chip firing move of the form of $\operatorname{CF}(\{ x \}, l)$ or $\operatorname{CF}(\Gamma_1 \setminus (y, x], \infty)^{\odot(-1)}$ takes its maximum value at and only at $x$.

\begin{cl}
	\label{cl1}
For any $x \in \Gamma_1 \setminus \Gamma_{1\infty}$ and $l_1, l_2 \in \boldsymbol{R}_{>0} \cup \{ \infty \}$,
\begin{align*}
\{ x^{\prime} \in 
\Gamma_2 \,|\, \psi(\operatorname{CF}(\{ x \}, l_1))(x^{\prime}) = 0 \} = \{ x^{\prime} \in \Gamma_2 \,|\, \psi(\operatorname{CF}(\{ x \}, l_2))(x^{\prime}) = 0 \}.
\end{align*}
\end{cl}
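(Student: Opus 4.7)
The plan is to relate $\operatorname{CF}(\{x\}, l_1)$ and $\operatorname{CF}(\{x\}, l_2)$ by a single tropical sum with a constant, and then push this through $\psi$. By symmetry I may assume $l_1 \leq l_2$; the case $l_1 = l_2$ is trivial, and the case $l_1 = \infty$ forces $l_2 = \infty$, so I may further assume $l_1 \in \boldsymbol{R}_{>0}$, so that $-l_1 \in \boldsymbol{R}_{<0}$.

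The key identity, holding in $\operatorname{Rat}(\Gamma_1)$, is
\[
\operatorname{CF}(\{x\}, l_1) = \operatorname{CF}(\{x\}, l_2) \oplus (-l_1),
\]
where the constant $-l_1 \in \boldsymbol{T}$ is regarded as a rational function via the structure map $\boldsymbol{T} \hookrightarrow \operatorname{Rat}(\Gamma_1)$. This is pointwise the elementary equality $-\min\{\operatorname{dist}(x,y), l_1\} = \max\{-\min\{\operatorname{dist}(x,y), l_2\},\, -l_1\}$, which follows from $\min\{d, l_2, l_1\} = \min\{d, l_1\}$ whenever $l_1 \leq l_2$.

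Applying the $\boldsymbol{T}$-algebra homomorphism $\psi$, which preserves $\oplus$ and fixes constants from $\boldsymbol{T}$, yields
\[
\psi(\operatorname{CF}(\{x\}, l_1))(x') = \max\{\psi(\operatorname{CF}(\{x\}, l_2))(x'),\, -l_1\}
\]
for every $x' \in \Gamma_2$. Since $\operatorname{CF}(\{x\}, l_i)$ attains its maximum value $0$ at $x$, Lemma \ref{lem4} gives that $\psi(\operatorname{CF}(\{x\}, l_i))$ also has maximum value $0$ on $\Gamma_2$; in particular $\psi(\operatorname{CF}(\{x\}, l_i))(x') \leq 0$ for every $x'$. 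Since $-l_1 < 0$, the right-hand maximum equals $0$ if and only if $\psi(\operatorname{CF}(\{x\}, l_2))(x') = 0$, and the two zero sets coincide.

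There is no serious obstacle: the only content is spotting the identity $\operatorname{CF}(\{x\}, l_1) = \operatorname{CF}(\{x\}, l_2) \oplus (-l_1)$, after which Lemma \ref{lem4} together with the strict negativity of $-l_1$ completes the argument mechanically.
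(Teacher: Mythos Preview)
Your proof is correct and follows essentially the same approach as the paper: the paper uses the identity $\operatorname{CF}(\{x\}, l) = \operatorname{CF}(\{x\}, \infty) \oplus (-l)$ for each finite $l$ (relating everything to the common reference $l=\infty$), while you use the equivalent identity $\operatorname{CF}(\{x\}, l_1) = \operatorname{CF}(\{x\}, l_2) \oplus (-l_1)$ directly between $l_1 \le l_2$, and both then invoke Lemma~\ref{lem4} to finish.
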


\begin{proof}
For $l$ such that $0 < l < \infty$, since 
\begin{align*}
\operatorname{CF}(\{ x \}, l) = \operatorname{CF}(\{ x \}, \infty) \oplus (-l),
\end{align*}
we have 
\begin{align}
	\label{eqn1}
\psi(\operatorname{CF}(\{ x \}, l)) = \psi(\operatorname{CF}(\{ x \}, \infty)) \oplus (-l).
\end{align}
Hence we have the conclusion by Lemma \ref{lem4}.
\end{proof}

By Claim \ref{cl1}, for $x \in \Gamma_1 \setminus \Gamma_{1\infty}$, the set $\{ x^{\prime} \in \Gamma_2 \,|\, \psi(\operatorname{CF}(\{ x \}, l))(x^{\prime}) = 0 \}$ is independent of the choice of $l \in \boldsymbol{R}_{>0} \cup \{ \infty \}$.
Let $\operatorname{Max}_x^{\prime}$ denote this set.

Similarly, we can prove the following claim:

\begin{cl}
	\label{cl2}
For any $x \in \Gamma_{1\infty}$ and finite points $y_1, y_2$ on the unique edge $e$ incident to $x$,
\begin{align*}
&~ \{ x^{\prime} \in \Gamma_2 \,|\, \psi(\operatorname{CF}(\Gamma_1 \setminus (y_1, x], \infty))^{\odot (-1)}(x^{\prime}) = \infty \}\\
=&~ \{ x^{\prime} \in \Gamma_2 \,|\, \psi(\operatorname{CF}(\Gamma_1 \setminus (y_2, x], \infty))^{\odot (-1)}(x^{\prime}) = \infty \}.
\end{align*}
\end{cl}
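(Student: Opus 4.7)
The plan is to adapt the argument of Claim \ref{cl1}, but since the two chip firings $\operatorname{CF}(\Gamma_1 \setminus (y_1, x], \infty)$ and $\operatorname{CF}(\Gamma_1 \setminus (y_2, x], \infty)$ do not differ by a global tropical constant (they agree up to such a shift only on one sub-interval), I will replace the single identity used in Claim \ref{cl1} by a pointwise sandwich, and then combine it with the order-preserving property of $\psi$ in a squeeze argument. Without loss of generality, assume $y_1$ lies strictly between $y_2$ and $x$ on $e$, and set $d := \operatorname{dist}(y_1, y_2) > 0$ (the case $y_1 = y_2$ is trivial).

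First, I would establish the pointwise sandwich
\begin{align*}
\operatorname{CF}(\Gamma_1 \setminus (y_2, x], \infty) \leq \operatorname{CF}(\Gamma_1 \setminus (y_1, x], \infty) \leq \operatorname{CF}(\Gamma_1 \setminus (y_2, x], \infty) \odot d
\end{align*}
on $\Gamma_1$. The left inequality is immediate from the containment $\Gamma_1 \setminus (y_2, x] \subset \Gamma_1 \setminus (y_1, x]$, which reverses when passing to $-\operatorname{dist}(\cdot, z)$. The right inequality is a short three-case check on $\Gamma_1 \setminus (y_2, x]$, $(y_2, y_1]$, and $(y_1, x]$: on $(y_1, x]$ the additivity $\operatorname{dist}(y_2, z) = d + \operatorname{dist}(y_1, z)$ forces exact equality of the outer two terms, while on the other two pieces the right-hand side is $\geq 0$ and the middle function vanishes.

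Second, I would use the elementary fact that every semiring homomorphism is order-preserving: $f \leq g$ pointwise is equivalent to $f \oplus g = g$, and applying $\psi$ gives $\psi(f) \oplus \psi(g) = \psi(g)$, i.e., $\psi(f) \leq \psi(g)$. Applying $\psi$ to the sandwich above (and using $\psi(d) = d$, since $\psi$ is a $\boldsymbol{T}$-algebra homomorphism) and evaluating at an arbitrary $x^{\prime} \in \Gamma_2$ yields
\begin{align*}
\psi(\operatorname{CF}(\Gamma_1 \setminus (y_2, x], \infty))(x^{\prime}) \leq \psi(\operatorname{CF}(\Gamma_1 \setminus (y_1, x], \infty))(x^{\prime}) \leq \psi(\operatorname{CF}(\Gamma_1 \setminus (y_2, x], \infty))(x^{\prime}) + d.
\end{align*}
Since $d$ is a finite real number, the outer two values equal $-\infty$ simultaneously with the middle one. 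Applying $\odot(-1)$ converts the common condition ``value $= -\infty$'' into ``value $= \infty$'', which is exactly the equality of the two sets in the statement.

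The main subtlety is guessing the correct form of the right half of the sandwich: one needs the uniform tropical shift $\odot d$ rather than any position-dependent correction, and this requires using the specific geometry of chip firing moves anchored at a point at infinity (so that the difference of the two chip firings on the shared sub-interval $(y_1, x]$ is the constant $d$, not something growing). Once the sandwich is in place, order-preservation of $\psi$ together with the finiteness of $d$ yields the claim with no further work.
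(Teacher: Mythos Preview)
Your proof is correct and is a genuinely different route from the paper's. The paper introduces an auxiliary point $y_3$ on $e$ farther from $x$ than both $y_1$ and $y_2$, and proves the exact identity
\[
a_i \odot \operatorname{CF}(\Gamma_1 \setminus (y_3, x], \infty)^{\odot(-1)} \oplus 0 \;=\; \operatorname{CF}(\Gamma_1 \setminus (y_i, x], \infty)^{\odot(-1)},
\qquad a_i := \operatorname{CF}(\Gamma_1 \setminus (y_3, x], \infty)(y_i) < 0,
\]
then applies $\psi$ to get an equality (labeled (\ref{eqn2}) in the paper) from which both sets are seen to coincide with the $y_3$-set. You instead compare $y_1$ and $y_2$ directly via the two-sided bound
\[
\operatorname{CF}(\Gamma_1 \setminus (y_2, x], \infty) \;\le\; \operatorname{CF}(\Gamma_1 \setminus (y_1, x], \infty) \;\le\; \operatorname{CF}(\Gamma_1 \setminus (y_2, x], \infty) \odot d,
\]
and push it through $\psi$ using order-preservation in the idempotent semiring. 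Your argument is shorter and avoids the extra point $y_3$; it also does not invoke Lemma~\ref{lem4} (hence does not use injectivity of $\psi$ for this claim). The trade-off is that the paper's exact identity (\ref{eqn2}) is reused verbatim in the proofs of Claims~\ref{cl8} and~\ref{cl13}, so the paper's approach yields a formula that does further work later, whereas your sandwich, while sufficient here, would have to be supplemented by such an identity when those later claims are reached.
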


\begin{proof}
We can choose a finite point $y_3$ on $e$ such that each $y_1$ and $y_2$ is not farther from $x$ than $y_3$.
By Lemma \ref{lem4}, the maximum value of $\psi(\operatorname{CF}(\Gamma_1 \setminus (y_1, x], \infty))^{\odot (-1)}$ is $\infty$.
For $i = 1, 2$ and the value $a_i := \operatorname{CF}(\Gamma_1 \setminus (y_3, x], \infty)(y_i) < 0$,
\begin{align*}
a_i \odot \operatorname{CF}(\Gamma_1 \setminus (y_3, x], \infty)^{\odot (-1)} \oplus 0 = \operatorname{CF}(\Gamma_1 \setminus (y_i, x], \infty)^{\odot (-1)}.
\end{align*}
Thus we have 
\begin{align}
	\label{eqn2}
a_i \odot \psi(\operatorname{CF}(\Gamma_1 \setminus (y_3, x], \infty))^{\odot (-1)} \oplus 0 = \psi(\operatorname{CF}(\Gamma_1 \setminus (y_i, x], \infty))^{\odot (-1)}.
\end{align}
Therefore we have the conclusion.
\end{proof}

By Claim \ref{cl2}, for $x \in \Gamma_{1\infty}$, the set $\{ x^{\prime} \in \Gamma_1 \,|\, \psi(\operatorname{CF}(\Gamma_2 \setminus (y, x], \infty))^{\odot (-1)}(x^{\prime}) = \infty \}$ is independent of the choice of a finite point $y$ on $e$.
Let $\operatorname{Max}_x^{\prime}$ denote this set.

\begin{cl}
	\label{cl3}
For any $x \in \Gamma_1 \setminus \Gamma_{1\infty}$, there exists $\varepsilon > 0$ such that $\psi(\operatorname{CF}(\{ x \}, \varepsilon))$ has a constant slope on each connected component of $U^{\prime} \setminus \operatorname{Max}_x^{\prime}$ and is constant $- \varepsilon$ on $\Gamma_2 \setminus U^{\prime}$, where $U^{\prime}$ is the $\varepsilon$-neighborhood of $\operatorname{Max}_x^{\prime}$.
\end{cl}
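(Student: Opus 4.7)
The starting observation is that identity \eqref{eqn1} lets one rewrite the function of interest as
\[
\psi(\operatorname{CF}(\{x\}, \varepsilon)) = F \oplus (-\varepsilon), \qquad F := \psi(\operatorname{CF}(\{x\}, \infty)),
\]
where $F$ is a single rational function on $\Gamma_2$ that does not depend on $\varepsilon$. Because $F \in \operatorname{Rat}(\Gamma_2)$, it is piecewise affine with only finitely many pieces and integer slopes; by Lemma \ref{lem4} it attains its maximum value $0$ precisely on the closed set $\operatorname{Max}_x^{\prime}$. The whole claim then reduces to controlling the local structure of $F$ in a small neighborhood of $\operatorname{Max}_x^{\prime}$ while keeping $F$ bounded away from $0$ elsewhere.

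My plan is first to fix a loopless model $(G, l)$ of $\Gamma_2$, refined enough that $F$ is affine on every edge of $G$ and every connected component of $\operatorname{Max}_x^{\prime}$ is a union of vertices and edges of $G$. I would then choose $\varepsilon > 0$ smaller than each of the following finitely many positive quantities: the length of every edge of $G$ having at most one endpoint in $\operatorname{Max}_x^{\prime}$; the distance from $\operatorname{Max}_x^{\prime}$ to any vertex of $G$ not lying in $\operatorname{Max}_x^{\prime}$; and the gap between $0$ and the next-highest local maximum value of $F$ on $\Gamma_2$. With such an $\varepsilon$, the $\varepsilon$-neighborhood $U^{\prime}$ of $\operatorname{Max}_x^{\prime}$ is a disjoint union of initial subintervals of edges of $G$ emanating from $\operatorname{Max}_x^{\prime}$, on each of which $F$ has one single integer slope. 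This gives part (a): on every connected component of $U^{\prime} \setminus \operatorname{Max}_x^{\prime}$ the function $F$, and hence $F \oplus (-\varepsilon)$, has constant slope. For part (b), a point $y \notin U^{\prime}$ lies at distance greater than $\varepsilon$ from $\operatorname{Max}_x^{\prime}$; I would verify $F(y) \leq -\varepsilon$ by combining the integer slope bound along the edge of $G$ crossed on leaving $\operatorname{Max}_x^{\prime}$ with the gap condition that forbids $F$ from climbing back up to the value $0$ on the far side.

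I expect the main obstacle to be part (b), i.e. showing $F(y) \leq -\varepsilon$ throughout $\Gamma_2 \setminus U^{\prime}$. Controlling $F$ only on the edges of $G$ directly leaving $\operatorname{Max}_x^{\prime}$ is not by itself sufficient: one has to exploit both the global piecewise-affine structure, to bound how high $F$ can climb back on distant components, and the fact that the maximum $0$ is attained only on $\operatorname{Max}_x^{\prime}$, so that the next-highest critical value of $F$ is strictly below $0$. Choosing $\varepsilon$ beneath this combinatorial gap is the mechanism that makes parts (a) and (b) compatible and delivers the claim.
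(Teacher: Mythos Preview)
Your proposal is correct and follows the same approach as the paper: both invoke identity \eqref{eqn1} to reduce the question to the single fixed rational function $F=\psi(\operatorname{CF}(\{x\},\infty))$, after which the existence of a suitable $\varepsilon$ is a matter of the finite piecewise-affine structure of $F$. The paper's proof is simply the one line ``It is clear by the equality \eqref{eqn1},'' so your plan is a faithful (and considerably more explicit) unpacking of that; in particular, your worry about part~(b) is already resolved by your own gap condition, since any interior maximum of $F$ on $\Gamma_2\setminus U'$ is a non-global local maximum of $F$ and hence has value below $-\varepsilon$, while the boundary values are at most $-\varepsilon$ by the integer-slope bound.
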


\begin{proof}
It is clear by the equality (\ref{eqn1}) in the proof of Claim \ref{cl1}.
\end{proof}

Similarly, we have the following claim:

\begin{cl}
	\label{cl8}
For any $x \in \Gamma_{1\infty}$, there exists a finite point $y$ on the unique edge incident to $x$ such that $\psi(\operatorname{CF}(\Gamma_1 \setminus (y , x], \infty))^{\odot (-1)}$ has each boundary point of $\{ x^{\prime} \in \Gamma_2 \,|\, \psi(\operatorname{CF}(\Gamma_1 \setminus (y, x], \infty))^{\odot (-1)}(x^{\prime}) = 0 \}$ as its zero; has from each such point a constant slope; has poles at and only at each point of $\operatorname{Max}_x^{\prime}$; and has no other zeros and poles.
\end{cl}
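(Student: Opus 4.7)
The plan is to argue in close parallel to the proof of Claim \ref{cl3}, exploiting the equality (\ref{eqn2}) established in the proof of Claim \ref{cl2}. First I would fix an auxiliary finite point $y_0$ on the edge $e$ incident to $x$ and set
\begin{align*}
g := \psi(\operatorname{CF}(\Gamma_1 \setminus (y_0, x], \infty))^{\odot (-1)}.
\end{align*}
By Lemma \ref{lem4} and the definition of $\operatorname{Max}^{\prime}_x$, the rational function $g$ takes values in $[0, +\infty]$ and attains its maximum $+\infty$ exactly on $\operatorname{Max}^{\prime}_x$; being a rational function, it is piecewise affine with integer slopes and has only finitely many pieces. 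For any finite point $y$ on $e$ strictly between $y_0$ and $x$, equation (\ref{eqn2}) yields
\begin{align*}
h := \psi(\operatorname{CF}(\Gamma_1 \setminus (y, x], \infty))^{\odot (-1)} = a \odot g \oplus 0,
\end{align*}
where $a := -\operatorname{dist}(y, y_0) < 0$ tends to $-\infty$ as $y \to x$.

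The next step is to fix a loopless model $(G, l)$ of $\Gamma_2$ such that $\operatorname{Max}^{\prime}_x \subseteq V(G)$ and $g$ is affine on each edge of $G$, and then to choose $y$ close enough to $x$ that $|a|$ strictly exceeds $g(v)$ for every vertex $v \in V(G) \setminus \operatorname{Max}^{\prime}_x$. This is possible because such vertices are finite in number and $g$ is finite at each of them. With this choice, the super-level set $\{ x^{\prime} \in \Gamma_2 \,|\, g(x^{\prime}) > |a|\}$ decomposes as a disjoint union of finitely many half-open intervals, each of the form $(b, p]$ with $p \in \operatorname{Max}^{\prime}_x$ a leaf end and $b$ a single interior point of the leaf edge of $G$ incident to $p$, on which $g$ is affine with positive slope directed toward $p$.

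All four assertions of the claim then follow by inspection of outgoing slopes. The set $\{h = 0\} = \{g \le |a|\}$ has topological boundary equal to the finite collection of points $b$ above. At each such $b$, the function $h$ is identically $0$ on the $\{g \le |a|\}$ side and equals $a \odot g$ (affine with constant positive slope toward $p$) on the other side; thus the sum of outgoing slopes at $b$ is positive, so $b$ is a zero of $h$, and $h$ has a constant slope from $b$ toward $p$. No zeros or poles occur in the interior of $\{g \le |a|\}$, where $h \equiv 0$; nor on the open intervals $(b, p)$, where $h$ is affine with nonzero slope. At each $p \in \operatorname{Max}^{\prime}_x$ we have $h(p) = +\infty$ and the unique outgoing slope is negative, so $p$ is a pole; at every other point at infinity $h$ is locally constant and hence is neither a zero nor a pole.

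The main obstacle is justifying the existence of a sufficiently small $y$ with the required properties, which rests on the finite-pieces condition in the definition of a rational function: this ensures both that the set of vertices to beat is finite and that $g$ is finite off $\operatorname{Max}^{\prime}_x$. Once this geometric description of $h$ is in hand, the remainder is only the local slope analysis sketched above, formally parallel to the analysis underlying Claim \ref{cl3}.
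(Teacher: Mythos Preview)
Your proposal is correct and follows essentially the same approach as the paper: both rely on equality (\ref{eqn2}) to write $\psi(\operatorname{CF}(\Gamma_1 \setminus (y,x],\infty))^{\odot(-1)} = a \odot g \oplus 0$ and then take $a$ sufficiently negative. The paper's proof is the single line ``clear by the equality (\ref{eqn2})''; you have simply unpacked the slope analysis that this entails.
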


\begin{proof}
It is clear by the equality (\ref{eqn2}) in the proof of Claim \ref{cl2}.
\end{proof}

\begin{cl}
	\label{cl4}
For $x, y \in \Gamma_1 \setminus \Gamma_{1\infty}$, if $x \not= y$, then $\operatorname{Max}_x^{\prime} \cap \operatorname{Max}_y^{\prime} = \varnothing$.
\end{cl}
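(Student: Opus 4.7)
The plan is to exploit the multiplicativity of $\psi$ together with Lemma \ref{lem4} applied to the tropical product of the two chip firing moves centered at $x$ and at $y$. The crucial observation is that on $\Gamma_1$, the function $\operatorname{CF}(\{x\}, l)$ attains its maximum value $0$ only at $x$ and is strictly negative elsewhere, and the same holds for $\operatorname{CF}(\{y\}, l)$ at $y$. Consequently, once $l$ is chosen small enough compared to $\operatorname{dist}(x, y)$, the pointwise sum $\operatorname{CF}(\{x\}, l) \odot \operatorname{CF}(\{y\}, l)$ has a strictly negative maximum on $\Gamma_1$, and Lemma \ref{lem4} transports this fact to $\Gamma_2$.

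First I would note that, since $x$ and $y$ are distinct finite points of a tropical curve, the distance $d := \operatorname{dist}(x, y)$ is a positive real number (any geodesic between two finite points stays away from the points at infinity). I would then fix any $l \in \boldsymbol{R}_{>0}$ with $l \leq d/2$. A short computation using the formula $\operatorname{CF}(\{p\}, l)(z) = -\min\{\operatorname{dist}(p, z), l\}$ together with the triangle inequality shows that
\begin{align*}
\operatorname{max}\bigl\{ \operatorname{CF}(\{x\}, l)(z) + \operatorname{CF}(\{y\}, l)(z) \bigm| z \in \Gamma_1 \bigr\} = -l,
\end{align*}
with this maximum attained at $z = x$ and $z = y$ (elsewhere the condition $\operatorname{dist}(x,z) + \operatorname{dist}(y,z) \geq d \geq 2l$ forces at least one of the two truncated distances to equal $l$).

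Next, since $\psi$ is a $\boldsymbol{T}$-algebra homomorphism, $\psi(\operatorname{CF}(\{x\}, l) \odot \operatorname{CF}(\{y\}, l)) = \psi(\operatorname{CF}(\{x\}, l)) \odot \psi(\operatorname{CF}(\{y\}, l))$, and Lemma \ref{lem4}(1) guarantees that the maximum of this function over $\Gamma_2$ is also $-l$. If there existed $z^{\prime} \in \operatorname{Max}_x^{\prime} \cap \operatorname{Max}_y^{\prime}$, then by the defining property of these sets one would have $\psi(\operatorname{CF}(\{x\}, l))(z^{\prime}) = \psi(\operatorname{CF}(\{y\}, l))(z^{\prime}) = 0$, so the tropical product would take the value $0$ at $z^{\prime}$, contradicting the bound $-l < 0$. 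Hence $\operatorname{Max}_x^{\prime} \cap \operatorname{Max}_y^{\prime} = \varnothing$.

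I do not anticipate a serious obstacle: the argument reduces the claim to an elementary metric computation on $\Gamma_1$ plus a single application of Lemma \ref{lem4}. The only point requiring attention is the choice of $l$, but Claim \ref{cl1} grants complete freedom in that choice, so fixing $l \leq \operatorname{dist}(x, y)/2$ makes the computation go through cleanly.
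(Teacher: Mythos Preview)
Your argument is correct. The strategy matches the paper's: choose a small parameter so that the two chip firing moves cannot simultaneously attain their maximum, form an algebraic combination that detects this, push it through $\psi$, and evaluate at a hypothetical common point $z'$.

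The only difference is in the algebraic witness. The paper works with the tropical \emph{sum of the inverses}: for $\varepsilon \le \operatorname{dist}(x,y)/2$ one has the identity
\[
\operatorname{CF}(\{x\},\varepsilon)^{\odot(-1)} \oplus \operatorname{CF}(\{y\},\varepsilon)^{\odot(-1)} = \varepsilon
\]
in $\operatorname{Rat}(\Gamma_1)$, and since this is an equality of rational functions it is preserved verbatim by $\psi$, with no appeal to Lemma~\ref{lem4}; evaluating at $z'$ gives $0 = \varepsilon$. You instead take the tropical \emph{product} $\operatorname{CF}(\{x\},l)\odot\operatorname{CF}(\{y\},l)$, compute its maximum to be $-l$, and then invoke Lemma~\ref{lem4} to transport the maximum. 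Both routes are short and clean; the paper's version avoids the extra lemma, while yours has the mild advantage that the metric computation (triangle inequality) is a shade more transparent than checking the pointwise identity.
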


\begin{proof}
Assume that $\operatorname{Max}_x^{\prime} \cap \operatorname{Max}_y^{\prime} \not= \varnothing$.
Since $x \not= y \in \Gamma_1 \setminus \Gamma_{1\infty}$, there exists $\varepsilon > 0$ such that 
\begin{align*}
\{ z \in \Gamma_1 \,|\, \operatorname{CF}(\{ x \}, \varepsilon)(z) = -\varepsilon \} \cup \{ z \in \Gamma_1 \,|\, \operatorname{CF}(\{ y \}, \varepsilon)(z) = -\varepsilon \} = \Gamma_1.
\end{align*}
We have
\begin{align*}
\operatorname{CF}(\{ x \}, \varepsilon)^{\odot (-1)} \oplus \operatorname{CF}(\{ y \}, \varepsilon)^{\odot (-1)} = \varepsilon.
\end{align*}
Thus 
\begin{align*}
\psi(\operatorname{CF}(\{ x \}, \varepsilon))^{\odot (-1)} \oplus \psi(\operatorname{CF}(\{ y \}, \varepsilon))^{\odot (-1)} = \varepsilon.
\end{align*}
On the other hand, for any $z^{\prime} \in \operatorname{Max}_x^{\prime} \cap \operatorname{Max}_y^{\prime}$,
\begin{align*}
\psi(\operatorname{CF}(\{ x \}, \varepsilon))^{\odot (-1)}(z^{\prime}) \oplus \psi(\operatorname{CF}(\{ y \}, \varepsilon))^{\odot (-1)}(z^{\prime}) = 0 \not= \varepsilon.
\end{align*}
It is a contradiction.
Thus we have $\operatorname{Max}_x^{\prime} \cap \operatorname{Max}_y^{\prime} = \varnothing$.
\end{proof}

\begin{cl}
	\label{cl9}
For $x, y \in \Gamma_{1\infty}$, if $x \not= y$, then $\operatorname{Max}_x^{\prime} \cap \operatorname{Max}_y^{\prime} = \varnothing$.
\end{cl}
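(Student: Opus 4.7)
The plan is to mirror the argument of Claim \ref{cl4}, adapted to the points-at-infinity setting. The idea is: choose disjoint open neighborhoods of $x$ and $y$ along their respective leaf edges, observe that the $\oplus$-sum of the corresponding chip firing moves is identically $0$ on $\Gamma_1$, push this identity through $\psi$, and then use the definition of $\operatorname{Max}^{\prime}_{\bullet}$ at a hypothetical common point to reach a contradiction.

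First I would observe that, in the canonical model of $\Gamma_1$, two distinct points at infinity necessarily lie on distinct leaf edges $e_x, e_y$; this is immediate from the construction of $(G_{\circ}, l_{\circ})$ recalled in Subsection \ref{subsection2.2}. Accordingly I choose finite points $p_x \in e_x$ and $p_y \in e_y$ so that the open neighborhoods $(p_x, x]$ and $(p_y, y]$ (contained in disjoint leaf edges) satisfy $(p_x, x] \cap (p_y, y] = \varnothing$. Note that $\operatorname{CF}(\Gamma_1 \setminus (p_x, x], \infty)$ vanishes precisely on $\Gamma_1 \setminus (p_x, x]$ and is strictly negative on $(p_x, x]$, with value $-\infty$ at $x$; the analogous statement holds for $y$.

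Because $(p_x, x] \cap (p_y, y] = \varnothing$, every point of $\Gamma_1$ lies outside at least one of the two neighborhoods, so on $\Gamma_1$
\begin{align*}
\operatorname{CF}(\Gamma_1 \setminus (p_x, x], \infty) \oplus \operatorname{CF}(\Gamma_1 \setminus (p_y, y], \infty) = 0.
\end{align*}
Applying $\psi$ and using that it is a $\boldsymbol{T}$-algebra homomorphism (so in particular $\psi(0) = 0$) yields on $\Gamma_2$
\begin{align*}
\psi(\operatorname{CF}(\Gamma_1 \setminus (p_x, x], \infty)) \oplus \psi(\operatorname{CF}(\Gamma_1 \setminus (p_y, y], \infty)) = 0.
\end{align*}

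Finally I would assume toward a contradiction that some $z^{\prime} \in \operatorname{Max}^{\prime}_x \cap \operatorname{Max}^{\prime}_y$ exists. By the definition of $\operatorname{Max}^{\prime}_{\bullet}$ at a point at infinity, both $\psi(\operatorname{CF}(\Gamma_1 \setminus (p_x, x], \infty))^{\odot (-1)}(z^{\prime}) = \infty$ and its analogue for $y$ hold, equivalently $\psi(\operatorname{CF}(\Gamma_1 \setminus (p_x, x], \infty))(z^{\prime}) = -\infty$ and similarly for $y$. Evaluating the second displayed equation at $z^{\prime}$ then gives $-\infty = 0$, a contradiction, so $\operatorname{Max}^{\prime}_x \cap \operatorname{Max}^{\prime}_y = \varnothing$. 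The only new geometric input compared to Claim \ref{cl4} is the placement of the two neighborhoods into disjoint leaf edges; I do not anticipate any serious obstacle beyond this bookkeeping.
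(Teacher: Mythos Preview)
Your proposal is correct and follows essentially the same route as the paper's proof: choose finite points on the distinct leaf edges so that the two half-open intervals are disjoint, deduce $\operatorname{CF}(\Gamma_1 \setminus (p_x, x], \infty) \oplus \operatorname{CF}(\Gamma_1 \setminus (p_y, y], \infty) = 0$, push through $\psi$, and evaluate at a hypothetical common point of $\operatorname{Max}^{\prime}_x$ and $\operatorname{Max}^{\prime}_y$ to get $-\infty = 0$. The only difference is that you spell out why the two intervals can be taken disjoint, whereas the paper simply asserts this from $x \neq y$.
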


\begin{proof}
Assume that $\operatorname{Max}_x^{\prime} \cap \operatorname{Max}_y^{\prime} \not= \varnothing$.
Since $x \not= y$, there exists a finite point $x_1$ (resp. $y_1$) on the unique edge incident to $x$ (resp. $y$) such that $\operatorname{CF}(\Gamma_1 \setminus (x_1, x], \infty) \oplus \operatorname{CF}(\Gamma_1 \setminus (y_1, y], \infty) = 0$.
Thus we have $\psi(\operatorname{CF}(\Gamma_1 \setminus (x_1, x], \infty)) \oplus \psi(\operatorname{CF}(\Gamma_1 \setminus (y_1, y], \infty)) = 0$.
On the other hand, for any $z^{\prime} \in \operatorname{Max}_x^{\prime} \cap \operatorname{Max}_y^{\prime}$, we have
\begin{align*}
&~ \psi(\operatorname{CF}(\Gamma_1 \setminus (x_1, x], \infty))(z^{\prime}) \oplus \psi(\operatorname{CF}(\Gamma_1 \setminus (y_1, y], \infty))(z^{\prime})\\
=&~ -\infty \oplus (-\infty)\\
=&~ -\infty\\
\not=&~ 0,
\end{align*}
which is a contradiction.
\end{proof}

\begin{cl}
	\label{cl11}
If $x \in \Gamma_{1\infty}$ and $y \in \Gamma_1 \setminus \Gamma_{1\infty}$, then $\operatorname{Max}_x^{\prime} \cap \operatorname{Max}_y^{\prime} = \varnothing$.
\end{cl}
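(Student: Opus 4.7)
The plan is to mimic the strategies of Claims \ref{cl4} and \ref{cl9}: assume for contradiction that some $z' \in \operatorname{Max}_x^{\prime} \cap \operatorname{Max}_y^{\prime}$ exists, exhibit a rational function $F$ on $\Gamma_1$ whose global maximum is controlled by Lemma \ref{lem4}, and derive a contradiction by evaluating $\psi(F)$ at $z'$. The new wrinkle compared with those two claims is that $\operatorname{Max}_y^{\prime}$ records where $\psi(\operatorname{CF}(\{y\}, l))$ attains its \emph{maximum} $0$, while $\operatorname{Max}_x^{\prime}$ records where $\psi(\operatorname{CF}(\Gamma_1 \setminus (x_1, x], \infty))$ attains its \emph{minimum} $-\infty$; accordingly I combine the two chip firing moves multiplicatively (via $\odot$) rather than tropically additively (via $\oplus$) as in the previous two proofs.

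Concretely, since $y$ is a finite point of $\Gamma_1$ and the unique edge $e$ of $\Gamma_1$ incident to $x$ is isometric to $[0, \infty]$ with $x$ at the infinity end, I would choose a finite point $x_1$ on $e$ with $y \notin (x_1, x]$ (possible: if $y$ lies on $e$, take $x_1$ strictly between $y$ and $x$; otherwise any finite $x_1 \in e$ works). Set
\[
F := \operatorname{CF}(\{y\}, \infty) \odot \operatorname{CF}(\Gamma_1 \setminus (x_1, x], \infty)^{\odot(-1)},
\]
equivalently $F(z) = -\operatorname{dist}(y, z) + \operatorname{dist}(\Gamma_1 \setminus (x_1, x], z)$. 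I would then verify that $F$ attains its maximum $0$ on $\Gamma_1$ only at $y$: for $z \notin (x_1, x]$ this is immediate from $F(z) = -\operatorname{dist}(y, z) \le 0$; for $z \in (x_1, x]$, since this half-edge is only accessible from the rest of $\Gamma_1$ through $x_1$, every shortest path from $y$ to $z$ factors through $x_1$, so $\operatorname{dist}(y, z) = \operatorname{dist}(y, x_1) + \operatorname{dist}(x_1, z)$ and $\operatorname{dist}(\Gamma_1 \setminus (x_1, x], z) = \operatorname{dist}(x_1, z)$, whence $F$ restricts to the constant negative value $-\operatorname{dist}(y, x_1)$ on $(x_1, x]$. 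By Lemma \ref{lem4} the maximum of $\psi(F)$ on $\Gamma_2$ is then also $0$.

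Finally, since $\psi$ is a $\boldsymbol{T}$-algebra homomorphism, $\psi(F) = \psi(\operatorname{CF}(\{y\}, \infty)) \odot \psi(\operatorname{CF}(\Gamma_1 \setminus (x_1, x], \infty))^{\odot(-1)}$, and the defining properties of $\operatorname{Max}_y^{\prime}$ and $\operatorname{Max}_x^{\prime}$ give
\[
\psi(F)(z') = 0 \odot \infty = \infty,
\]
contradicting the bound $\psi(F)(z') \le 0$ from Lemma \ref{lem4}. The main technical point I expect to spell out carefully is the well-definedness of $F$ at $x$ itself, where the two tropical factors formally evaluate to $-\infty$ and $\infty$: the cancellation computed above shows that $F$ extends continuously to the finite value $-\operatorname{dist}(y, x_1)$ on all of $(x_1, x]$ (including at $x$), so $F$ is a bona fide rational function on $\Gamma_1$ and Lemma \ref{lem4} applies to it.
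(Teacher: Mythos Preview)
Your argument is correct, but it takes a different route from the paper's own proof. The paper stays in the pattern of Claims~\ref{cl4} and~\ref{cl9}: it picks $x_1$ and $\varepsilon>0$ so that the $\oplus$-combination
\[
\operatorname{CF}(\Gamma_1 \setminus (x_1, x], \infty)\;\oplus\;(-\varepsilon)\odot\operatorname{CF}(\{y\},\varepsilon)^{\odot(-1)}
\]
is the constant function $0$, applies $\psi$ to obtain a literal identity in $\operatorname{Rat}(\Gamma_2)$, and then evaluates at $z'$ to get $-\varepsilon\neq 0$. You instead form a $\odot$-product $F=\operatorname{CF}(\{y\},\infty)\odot\operatorname{CF}(\Gamma_1\setminus(x_1,x],\infty)^{\odot(-1)}$, show only that $\max F=0$, invoke Lemma~\ref{lem4}, and then find $\psi(F)(z')=\infty$. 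The trade-off: the paper's $\oplus$-identity never meets a $-\infty+\infty$ indeterminacy and does not even require Lemma~\ref{lem4} (just that $\psi$ sends the constant $0$ to $0$), whereas your version must appeal to the continuous-extension definition of $\odot$ both at $x\in\Gamma_{1\infty}$ (which you address) and at $z'\in\Gamma_{2\infty}$ (which you treat more briefly; note that $\psi(\operatorname{CF}(\Gamma_1\setminus(x_1,x],\infty))^{\odot(-1)}(z')=\infty$ already forces $z'\in\Gamma_{2\infty}$, since rational functions take $\pm\infty$ only there). Both arguments are short and valid; the paper's is slightly cleaner in avoiding the extension issue altogether, while yours is conceptually pleasant in that the contradiction ($\infty$ versus a finite maximum) is stark.
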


\begin{proof}
Assume that there exists an element $z^{\prime} \in \operatorname{Max}_x^{\prime} \cap \operatorname{Max}_y^{\prime}$.
There exist $x_1 \in \Gamma_1$ and $\varepsilon > 0$ satisfying 
\begin{align*}
\{ z \in \Gamma_1 \,|\, \operatorname{CF}(\Gamma_1 \setminus (x_1, x], \infty)(z) = 0 \} \cup \{ z \in \Gamma_1 \,|\, \operatorname{CF}(\{ y \}, \varepsilon)(z) = - \varepsilon \} = \Gamma_1.
\end{align*}
Then we have
\begin{align*}
\operatorname{CF}(\Gamma_1 \setminus (x_1, x], \infty) \oplus (-\varepsilon) \odot \operatorname{CF}(\{ y \}, \varepsilon)^{\odot (-1)} = 0,
\end{align*}
and hence
\begin{align*}
\psi(\operatorname{CF}(\Gamma_1 \setminus (x_1, x], \infty)) \oplus (-\varepsilon) \odot \psi(\operatorname{CF}(\{ y \}, \varepsilon))^{\odot (-1)} = 0.
\end{align*}
On the other hand, by assumption, we have
\begin{align*}
&~ \psi(\operatorname{CF}(\Gamma_1 \setminus (x_1, x], \infty))(z^{\prime}) \oplus (-\varepsilon) \odot \psi(\operatorname{CF}(\{ y \}, \varepsilon))^{\odot (-1)}(z^{\prime})\\
=&~ - \infty \oplus (-\varepsilon) \odot 0 \\
=&~ - \varepsilon \\
\not=&~ 0,
\end{align*}
which is a contradiction.
\end{proof}

By Claims \ref{cl4}, \ref{cl9}, \ref{cl11}, the correspondence $\operatorname{Max}^{\prime}_x \ni x^{\prime} \mapsto x$ becomes a map from $\bigcup_{x \in \Gamma_1} \operatorname{Max}^{\prime}_x$ to $\Gamma_1$.
We call this map $\varphi$.

\begin{cl}
	\label{cl6}
For $x \in \Gamma_1 \setminus \Gamma_{1\infty}$, $\varepsilon > 0$ such that $\psi(\operatorname{CF}(\{ x \}, \varepsilon))$ satisfies all of the conditions in Claim \ref{cl3} and any $d$ such that $0 < d < \varepsilon$, $\bigcup_{y \in \Gamma_1 : \operatorname{dist}(x, y) = d} \operatorname{Max}_y^{\prime} \subset \{ x^{\prime} \in \Gamma_2 \,|\, \psi(\operatorname{CF}(\{ x \}, \varepsilon))(x^{\prime}) = -d \}$.
\end{cl}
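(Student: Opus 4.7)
The plan is to sandwich $\psi(\operatorname{CF}(\{x\}, \varepsilon))(x^{\prime})$ between $-d$ and $-d$ for every $y \in \Gamma_1$ with $\operatorname{dist}(x, y) = d$ and every $x^{\prime} \in \operatorname{Max}^{\prime}_y$. The two bounds will be produced by introducing two auxiliary rational functions on $\Gamma_1$ involving $\operatorname{CF}(\{y\}, l)$ for a suitable $l$, so that after applying $\psi$ and evaluating at $x^{\prime}$ the factor $\psi(\operatorname{CF}(\{y\}, l))(x^{\prime})$ vanishes by the very definition of $\operatorname{Max}^{\prime}_y$.

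For the lower bound, I would introduce $G := (-d) \odot \operatorname{CF}(\{y\}, \varepsilon - d)$ and check by the triangle inequality $\operatorname{dist}(x, z) \le d + \operatorname{dist}(y, z)$, splitting into the cases $\operatorname{dist}(y, z) \le \varepsilon - d$ and $\operatorname{dist}(y, z) > \varepsilon - d$, that $G(z) \le \operatorname{CF}(\{x\}, \varepsilon)(z)$ for every $z \in \Gamma_1$. Consequently $G \oplus \operatorname{CF}(\{x\}, \varepsilon) = \operatorname{CF}(\{x\}, \varepsilon)$, and applying $\psi$ and evaluating at $x^{\prime} \in \operatorname{Max}^{\prime}_y$ gives $\psi(\operatorname{CF}(\{x\}, \varepsilon))(x^{\prime}) \ge \psi(G)(x^{\prime}) = (-d) \odot 0 = -d$.

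For the upper bound, I would take $F := \operatorname{CF}(\{x\}, \varepsilon) \odot \operatorname{CF}(\{y\}, d)$. A four-case analysis based on whether $\operatorname{dist}(x, z) \le \varepsilon$ and whether $\operatorname{dist}(y, z) \le d$, in which the only nontrivial case uses $\operatorname{dist}(x, z) + \operatorname{dist}(y, z) \ge \operatorname{dist}(x, y) = d$, shows that $F(z) \le -d$ for every $z \in \Gamma_1$. Hence $F \oplus (-d) = -d$ in $\operatorname{Rat}(\Gamma_1)$, and applying $\psi$ yields $\psi(F)(x^{\prime\prime}) \le -d$ for every $x^{\prime\prime} \in \Gamma_2$. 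Specializing to $x^{\prime\prime} = x^{\prime} \in \operatorname{Max}^{\prime}_y$ and using $\psi(\operatorname{CF}(\{y\}, d))(x^{\prime}) = 0$, we obtain $\psi(\operatorname{CF}(\{x\}, \varepsilon))(x^{\prime}) = \psi(F)(x^{\prime}) \le -d$.

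Combining the two bounds gives the equality $\psi(\operatorname{CF}(\{x\}, \varepsilon))(x^{\prime}) = -d$ and hence the claimed inclusion. The main obstacle is the bookkeeping in the pointwise comparisons of $F$ and $G$ on $\Gamma_1$, where the inner $\min$ in each chip-firing move may or may not be saturated by its cap; but these are all triangle-inequality estimates, and no further difficulty is expected, since once the two pointwise inequalities are established everything else is a formal consequence of $\psi$ being a $\boldsymbol{T}$-algebra homomorphism together with the defining property of $\operatorname{Max}^{\prime}_y$. Incidentally, the special hypothesis on $\varepsilon$ coming from Claim \ref{cl3} plays no role in this argument: it works for every $\varepsilon > 0$ and every $0 < d < \varepsilon$.
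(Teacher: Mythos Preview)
Your argument is correct and follows the same overall strategy as the paper: establish two tropical inequalities in $\operatorname{Rat}(\Gamma_1)$ built from $\operatorname{CF}(\{y\},l)$, push them through $\psi$, and evaluate at a point of $\operatorname{Max}_y'$ where the $\psi(\operatorname{CF}(\{y\},l))$-factor vanishes. The differences are in execution rather than in idea.

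For the lower bound the paper uses $(-d)\odot\operatorname{CF}(\{y\},\delta)\le\operatorname{CF}(\{x\},\varepsilon)$ with a $\delta$ coming from Claim~\ref{cl3} applied to $y$, which forces it to shrink $\varepsilon$ so that $\varepsilon<d+\delta$ (and then appeal to equation~(\ref{eqn1}) to transfer the conclusion back to the original $\varepsilon$). Your choice $l=\varepsilon-d$ makes the inequality hold on the nose and removes both the auxiliary $\delta$ and the shrinking step. For the upper bound the paper rewrites the same inequality $\operatorname{CF}(\{x\},\varepsilon)\odot\operatorname{CF}(\{y\},d)\le -d$ in the form $\operatorname{CF}(\{y\},d)\le(-d)\odot\operatorname{CF}(\{x\},\varepsilon)^{\odot(-1)}$ and argues by contradiction after applying $\psi$; your formulation $F\oplus(-d)=-d$ is the same inequality pushed through directly. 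Your final remark is also accurate: the hypothesis from Claim~\ref{cl3} on $\varepsilon$ is not actually used in either proof of this inclusion; it only becomes relevant for the reverse inclusion in Claim~\ref{cl5}.
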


\begin{proof}
For $y$ such that $\operatorname{dist}(x, y) = d$, let $\delta > 0$ such that $\psi(\operatorname{CF}(\{ y \}, \delta))$ satisfies all of the conditions in Claim \ref{cl3}.
For any positive number $\varepsilon_1 < \varepsilon$, by the equality $(\ref{eqn1})$ in the proof of Claim \ref{cl1}, $\psi(\operatorname{CF}(\{ x \}, \varepsilon_1))$ satisfies all of the conditions in Claim \ref{cl3}.
Hence, if we need, by replacing $\varepsilon$ with a smaller positive number, we can assume that $d < \varepsilon < d + \delta$.
Since 
\begin{align*}
\operatorname{CF}(\{ x \}, \varepsilon) \oplus (-d) \odot \operatorname{CF}(\{ y \}, \delta) = \operatorname{CF}(\{ x \}, \varepsilon),
\end{align*}
we have
\begin{align*}
\psi(\operatorname{CF}(\{ x \}, \varepsilon)) \oplus (-d) \odot \psi(\operatorname{CF}(\{ y \}, \delta)) = \psi(\operatorname{CF}(\{ x \}, \varepsilon)).
\end{align*}
Hence, for any $y^{\prime} \in \operatorname{Max}_y^{\prime}$, $\psi(\operatorname{CF}(\{ x \}, \varepsilon))(y^{\prime}) \ge -d$.

Assume that there exists $y^{\prime} \in \operatorname{Max}_y^{\prime}$ such that $\psi(\operatorname{CF}(\{ x \}, \varepsilon))(y^{\prime}) > -d$.
Since
\begin{align*}
&~ \operatorname{CF}(\{ y \}, d) \oplus (- d) \odot \operatorname{CF}(\{ x \}, \varepsilon)^{\odot (-1)}\\
=&~ -d \odot \operatorname{CF}(\{ x \}, \varepsilon)^{\odot (-1)},
\end{align*}
we have
\begin{align*}
&~ \psi(\operatorname{CF}(\{ y \}, d)) \oplus (- d) \odot \psi(\operatorname{CF}(\{ x \}, \varepsilon))^{\odot (-1)}\\
=&~ -d \odot \psi(\operatorname{CF}(\{ x \}, \varepsilon))^{\odot (-1)}.
\end{align*}
On the other hand, we have
\begin{align*}
&~ \psi(\operatorname{CF}(\{ y \}, d))(y^{\prime}) \oplus (-d) \odot \psi(\operatorname{CF}(\{ x \}, \varepsilon))^{\odot (-1)}(y^{\prime})\\
=&~ 0 \oplus (-d) \odot \psi(\operatorname{CF}(\{ x \}, \varepsilon))^{\odot (-1)}(y^{\prime})\\
=&~ 0\\
>&~ -d \odot \psi(\operatorname{CF}(\{ x \}, \varepsilon))^{\odot (-1)}(y^{\prime}),
\end{align*}
which is a contradiction.
\end{proof}

\begin{cl}
	\label{cl5}
For $x \in \Gamma_1 \setminus \Gamma_{1\infty}$, there exists $\varepsilon > 0$ such that $\psi(\operatorname{CF}(\{ x \}, \varepsilon))$ satisfies all of the conditions in Claim \ref{cl3} and for any $d$ such that $0 < d < \varepsilon$, $\bigcup_{y \in \Gamma_1 : \operatorname{dist}(x, y) = d} \operatorname{Max}_y^{\prime} \supset \{ x^{\prime} \in \Gamma_2 \,|\, \psi(\operatorname{CF}(\{ x \}, \varepsilon))(x^{\prime}) = -d \}$.
\end{cl}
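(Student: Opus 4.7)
The plan is to choose $\varepsilon > 0$ small enough that both (i) $\psi(\operatorname{CF}(\{ x \}, \varepsilon))$ satisfies the conditions of Claim \ref{cl3} and (ii) the closed $\varepsilon$-ball around $x$ in $\Gamma_1$ is a ``star'', meaning a disjoint (away from $x$) union of half-edges emanating from $x$, each of length at least $\varepsilon$. Both requirements can be arranged because $x$ is a finite point of $\Gamma_1$ and hence has finite valence. Under (ii), for each $d \in (0, \varepsilon)$ the sphere $Y_d := \{ y \in \Gamma_1 \,|\, \operatorname{dist}(x, y) = d \}$ is finite---one point per half-edge at $x$---and one has $\operatorname{dist}(Y_d, z) = |\operatorname{dist}(x, z) - d|$ for every $z \in \Gamma_1$: the triangle inequality gives one direction, and for the other one uses that any geodesic from $x$ must cross $Y_d$.

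The heart of the argument is the algebraic identity in $\operatorname{Rat}(\Gamma_1)$
\begin{align*}
\operatorname{CF}(\{ x \}, 2d) \odot d = \operatorname{CF}(\{ x \}, d)^{\odot 2} \odot 2d \odot \operatorname{CF}(Y_d, d),
\end{align*}
where $\operatorname{CF}(Y_d, d) := \bigoplus_{y \in Y_d} \operatorname{CF}(\{ y \}, d)$. Writing $s := \operatorname{dist}(x, z)$, both sides evaluate to $d - \min(s, 2d)$ for every $z \in \Gamma_1$: one checks this by a three-case analysis on whether $s \le d$, $d \le s \le 2d$, or $s \ge 2d$, using the formula $\operatorname{dist}(Y_d, z) = |s - d|$ from (ii). Now, given $x^{\prime} \in \Gamma_2$ with $\psi(\operatorname{CF}(\{ x \}, \varepsilon))(x^{\prime}) = -d$, we apply $\psi$ to this identity and evaluate at $x^{\prime}$. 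By Claim \ref{cl1} together with $-d > -\varepsilon$, we obtain $\psi(\operatorname{CF}(\{ x \}, \infty))(x^{\prime}) = -d$, and hence $\psi(\operatorname{CF}(\{ x \}, 2d))(x^{\prime}) = \psi(\operatorname{CF}(\{ x \}, d))(x^{\prime}) = -d$ as well. The left-hand side then becomes $-d + d = 0$, while the first two factors on the right contribute $-2d + 2d = 0$, leaving
\begin{align*}
0 = \psi(\operatorname{CF}(Y_d, d))(x^{\prime}) = \bigoplus_{y \in Y_d} \psi(\operatorname{CF}(\{ y \}, d))(x^{\prime}).
\end{align*}
Since $Y_d$ is finite and each $\psi(\operatorname{CF}(\{ y \}, d))$ has maximum value $0$ by Lemma \ref{lem4}, the maximum is attained at some $y \in Y_d$, i.e., $\psi(\operatorname{CF}(\{ y \}, d))(x^{\prime}) = 0$; this is precisely the statement $x^{\prime} \in \operatorname{Max}^{\prime}_y$.

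The main obstacle is discovering the above identity; once it is in hand, the rest of the proof reduces to a routine three-case arithmetic check on $\Gamma_1$ together with two applications each of Claim \ref{cl1} and Lemma \ref{lem4}. Intuitively, the identity decomposes the reversed tent $\operatorname{CF}(\{ x \}, 2d)$ of slope $-1$ and height $2d$ around $x$ as two copies of the $d$-tent at $x$ (contributing slope $-2$) glued to the inverted $d$-tent at $Y_d$ (contributing slope $+1$ between $x$ and $Y_d$), so that the sum has overall slope $-1$ on the interval between $x$ and $Y_{2d}$.
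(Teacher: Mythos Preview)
Your argument is correct and in fact cleaner than the paper's. The paper works with the more involved identity
\[
\left( \operatorname{CF}(\{ x \}, \varepsilon)^{\odot (-1)} \oplus d \right)^{\odot (-1)}
= \left( \operatorname{CF}(\{ x \}, \varepsilon)^{\odot (-2)} \oplus 2d \right)^{\odot (-1)} \odot d
\oplus \bigoplus_{i}(-d) \odot \operatorname{CF}(\{ y_i \}, \delta),
\]
introduces an auxiliary $\delta>0$ so that each $\psi(\operatorname{CF}(\{ y_i \}, \delta))$ also satisfies Claim~\ref{cl3}, and then argues by contradiction: if the containment fails at some $d$, one finds a nearby point $w'$ where the two sides disagree. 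This only shows the bad values of $d$ are discrete, so the paper must shrink $\varepsilon$ a second time at the end; the full range $0<d<\varepsilon$ is only obtained later in Lemma~\ref{lem7}. Your purely multiplicative identity
\[
\operatorname{CF}(\{ x \}, 2d)\odot d \;=\; \operatorname{CF}(\{ x \}, d)^{\odot 2}\odot 2d \odot \operatorname{CF}(Y_d, d)
\]
avoids the tropical $\min$-operations $(\cdot^{\odot(-1)}\oplus c)^{\odot(-1)}$, never invokes Claim~\ref{cl3} for the points $y$, and yields the containment directly for every $d\in(0,\varepsilon)$ with no shrinking step. The trade-off is that your identity requires the $\varepsilon$-ball to be a star so that $\operatorname{dist}(Y_d,z)=|\operatorname{dist}(x,z)-d|$, but this is exactly the hypothesis the paper also imposes (``only two-valent points'' in the punctured ball), so nothing is lost.
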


\begin{proof}
Let $\varepsilon > 0$ be such that $\psi(\operatorname{CF}(\{ x \}, \varepsilon))$ satisfies all of the conditions in Claim \ref{cl3} and $\{ y \in \Gamma_1 \,|\, 0 < \operatorname{dist}(x, y) < \varepsilon \}$ consists of only two-valent points.
For any $d$ such that $0 < d < \varepsilon$, let $y_1 \ldots, y_{\operatorname{val}(x)}$ be all of the distinct points of $\Gamma_1$ such that $\operatorname{dist}(x, y_i) = d$.
Let $\delta > 0$ be such that each $\psi(\operatorname{CF}(\{ y_i \}, \delta))$ satisfies all the conditions in Claim \ref{cl3}.
For any positive number $\varepsilon_1 < \varepsilon$ (resp. $\delta_1 < \delta$), by the equality $(\ref{eqn1})$ in the proof of Claim \ref{cl1}, $\psi(\operatorname{CF}(\{ x \}, \varepsilon_1))$ (resp. $\psi(\operatorname{CF}(\{ y_i \}, \delta_1))$) satisfies all of the conditions in Claim \ref{cl3}.
Hence, if we need, by replacing $\varepsilon$ or $\delta$ with a smaller positive number, we can assume that $d < \varepsilon = d + \delta$.
We have
\begin{align*}
&~ \left( \operatorname{CF}(\{ x \}, \varepsilon)^{\odot (-1)} \oplus d \right)^{\odot (-1)}\\
=&~ \left( \operatorname{CF}(\{ x \}, \varepsilon)^{\odot (-2)} \oplus 2d \right)^{\odot (-1)} \odot d\\
&~  \oplus (-d) \odot \operatorname{CF}(\{ y_1 \}, \delta) \oplus \cdots \oplus (-d) \odot \operatorname{CF}(\{ y_{\operatorname{val}(x)} \}, \delta),
\end{align*}
and hence
\begin{align*}
&~ \left( \psi(\operatorname{CF}(\{ x \}, \varepsilon))^{\odot (-1)} \oplus d \right)^{\odot (-1)}\\
=&~ \left( \psi(\operatorname{CF}(\{ x \}, \varepsilon))^{\odot (-2)} \oplus 2d \right)^{\odot (-1)} \odot d\\
&~  \oplus (-d) \odot \psi(\operatorname{CF}(\{ y_1 \}, \delta)) \oplus \cdots \oplus (-d) \odot \psi(\operatorname{CF}(\{ y_{\operatorname{val}(x)} \}, \delta)).
\end{align*}

Assume that $\bigcup_{i = 1}^{\operatorname{val}(x)} \operatorname{Max}_{y_i}^{\prime} \not\supset \{ x^{\prime} \in \Gamma_2 \,|\, \psi(\operatorname{CF}(\{ x \}, \varepsilon))(x^{\prime}) = - d \}$.
There exists $z^{\prime} \in \{ x^{\prime} \in \Gamma_2 \,|\, \psi(\operatorname{CF}(\{ x \}, \varepsilon))(x^{\prime}) = - d\} \setminus \bigcup_{i = 1}^{\operatorname{val}(x)}\operatorname{Max}_{y_i}^{\prime}$.
When $z^{\prime}$ is a boundary point of $\{ x^{\prime} \in \Gamma_2 \,|\, \psi(\operatorname{CF}(\{ x \}, \varepsilon))(x^{\prime}) \ge -d \}$, there exists $w^{\prime}$ near $z^{\prime}$ such that $-d - \frac{\delta}{2} < \psi(\operatorname{CF}(\{ x \}, \varepsilon))(w^{\prime}) < -d$.
Since we can assume that $\delta$ is sufficiently small so that $\psi(\operatorname{CF}(\{ y_i \}, \delta))(w^{\prime}) = - \delta$, we have
\begin{align*}
&~ \left( \psi( \operatorname{CF}(\{ x \}, \varepsilon))^{\odot (-1)} (w^{\prime}) \oplus d ) \right)^{\odot (-1)}\\
=&~ \psi( \operatorname{CF}(\{ x \}, \varepsilon))(w^{\prime})\\
\not=&~ \psi(\operatorname{CF}(\{ x \}, \varepsilon))^{\odot 2}(w^{\prime}) \odot d\\
=&~ \left(\psi( \operatorname{CF}(\{ x \}, \varepsilon))^{\odot (-2)} (w^{\prime}) \oplus 2d \right)^{\odot (-1)} \odot d\\
&~ \oplus (-d) \odot \psi(\operatorname{CF}(\{ y_1 \}, \delta))(w^{\prime}) \oplus \cdots \oplus (-d) \odot \psi(\operatorname{CF}(\{ y_{\operatorname{val}(x)} \}, \delta))(w^{\prime}).
\end{align*}
It is a contradiction.
Hence the values such $d$ are discrete even if there exist.
Thus, if we need, by replacing $\varepsilon$ with a smaller positive number, we have the conclusion.
\end{proof}

\begin{cl}
	\label{cl12}
For $\psi(\operatorname{CF}(\Gamma \setminus (y, x], \infty))^{\odot (-1)}$ in Claim \ref{cl8} and any $z \in (y, x)$, $\operatorname{Max}_z^{\prime} \subset \{ x^{\prime} \in \Gamma_2 \,|\, \psi(\operatorname{CF}(\Gamma_1 \setminus (y, x], \infty))^{\odot (-1)}(x^{\prime}) = \operatorname{dist}(y, z) \}$.
\end{cl}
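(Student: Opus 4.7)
The plan is to follow the two-identity template used in the proof of Claim \ref{cl6}. Write $g := \operatorname{CF}(\Gamma_1 \setminus (y, x], \infty)$ and $d := \operatorname{dist}(y, z)$, and fix $z^{\prime} \in \operatorname{Max}_z^{\prime}$, so that $\psi(\operatorname{CF}(\{z\}, d))(z^{\prime}) = 0$ by definition of $\operatorname{Max}_z^{\prime}$. The target $\psi(g^{\odot (-1)})(z^{\prime}) = d$ is obtained as two one-sided bounds, each derived by applying $\psi$ to a carefully chosen identity of rational functions on $\Gamma_1$ and evaluating at $z^{\prime}$.

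For the lower bound, I would verify the identity $g^{\odot (-1)} \oplus d \odot \operatorname{CF}(\{z\}, d) = g^{\odot (-1)}$ on $\Gamma_1$, i.e.\ $d \odot \operatorname{CF}(\{z\}, d) \le g^{\odot (-1)}$ pointwise. The check is a short case analysis that uses the pendant nature of the leaf edge containing $x$: for any $w \in \Gamma_1 \setminus (y, x]$ every path from $z$ to $w$ must pass through $y$, so $\operatorname{dist}(z, w) \ge d$ and the left side equals $0 = g^{\odot (-1)}(w)$; on $(y, z]$ the two sides coincide; on $(z, x]$ the right side is strictly dominated by $g^{\odot (-1)}$. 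Applying $\psi$ and evaluating at $z^{\prime}$ yields $\psi(g^{\odot (-1)})(z^{\prime}) \oplus d = \psi(g^{\odot (-1)})(z^{\prime})$, whence $\psi(g^{\odot (-1)})(z^{\prime}) \ge d$.

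For the upper bound, the literal analogue $\operatorname{CF}(\{z\}, d) \le d \odot g$ of the second identity in the proof of Claim \ref{cl6} is invalid at $x$ because $g(x) = -\infty$, so I would use the truncated variant
\[ \operatorname{CF}(\{z\}, d) \oplus d \odot \operatorname{CF}(\Gamma_1 \setminus (y, x], 2d) = d \odot \operatorname{CF}(\Gamma_1 \setminus (y, x], 2d), \]
equivalently $\operatorname{CF}(\{z\}, d) \le d \odot (g \oplus (-2d))$ pointwise; the truncation level $2d$ is chosen precisely so that both sides agree at the common floor value $-d$ on the far part of $(y, x]$, including the value at $x$. After applying $\psi$ and using $\operatorname{CF}(\Gamma_1 \setminus (y, x], 2d) = g \oplus (-2d)$, evaluation at $z^{\prime}$ reduces the identity to
\[ \max\bigl(0, d + \psi(g)(z^{\prime})\bigr) = \max\bigl(d + \psi(g)(z^{\prime}), -d\bigr). \]
If one supposes $\psi(g^{\odot (-1)})(z^{\prime}) > d$, equivalently $\psi(g)(z^{\prime}) < -d$, the left side is $0$ while the right side is strictly negative, a contradiction; hence $\psi(g^{\odot (-1)})(z^{\prime}) \le d$. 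The main obstacle is isolating the truncation level $2d$: without any cap the pointwise inequality fails at $x$, with a smaller cap it fails on $(z, x]$ just past $z$, and with a larger cap the two sides no longer share the common floor needed to force the contradiction at $z^{\prime}$.
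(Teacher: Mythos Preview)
Your proof is correct and follows essentially the same approach as the paper's. The lower bound is proved via the identical identity $g^{\odot(-1)} \oplus d \odot \operatorname{CF}(\{z\}, d) = g^{\odot(-1)}$, and for the upper bound your truncated identity is the paper's identity $g \oplus (-2d) \oplus (-d) \odot \operatorname{CF}(\{z\}, d) = g \oplus (-2d)$ after tropically multiplying both sides by $d$; the resulting contradiction at $z'$ is the same.
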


\begin{proof}
For any $z \in (y, x)$, since
\begin{align*}
&~ \operatorname{CF}(\Gamma_1 \setminus (y, x], \infty)^{\odot (-1)} \oplus \operatorname{dist}(y, z) \odot \operatorname{CF}(\{ z \}, \operatorname{dist}(y, z))\\
=&~ \operatorname{CF}(\Gamma_1 \setminus (y, x], \infty)^{\odot (-1)},
\end{align*}
we have
\begin{align*}
&~ \psi(\operatorname{CF}(\Gamma_1 \setminus (y, x], \infty))^{\odot (-1)} \oplus \operatorname{dist}(y, z) \odot \psi(\operatorname{CF}(\{ z \}, \operatorname{dist}(y, z)))\\
=&~ \psi(\operatorname{CF}(\Gamma_1 \setminus (y, x], \infty))^{\odot (-1)}.
\end{align*}
Thus, for any $z^{\prime} \in \operatorname{Max}_z^{\prime}$, we have 
\begin{align*}
\psi(\operatorname{CF}(\Gamma_1 \setminus (y, x], \infty))^{\odot (-1)}(z^{\prime}) \ge \operatorname{dist}(y, z).
\end{align*}

Assume that there exists $z^{\prime} \in \operatorname{Max}_z^{\prime}$ such that $\psi(\operatorname{CF}(\Gamma_1 \setminus (y, x], \infty))^{\odot (-1)}(z^{\prime}) > \operatorname{dist}(y, z)$.
Since
\begin{align*}
&~ \operatorname{CF}(\Gamma_1 \setminus (y, x], \infty) \oplus (-2\operatorname{dist}(y, z)) \oplus (-\operatorname{dist}(y, z)) \odot \operatorname{CF}(\{ z \}, \operatorname{dist}(y, z))\\
=&~ \operatorname{CF}(\Gamma_1 \setminus (y, x], \infty) \oplus (-2\operatorname{dist}(y,z)),
\end{align*}
we have
\begin{align*}
&~ \psi(\operatorname{CF}(\Gamma_1 \setminus (y, x], \infty)) \oplus (-2\operatorname{dist}(y, z)) \oplus (-\operatorname{dist}(y, z)) \odot \psi(\operatorname{CF}(\{ z \}, \operatorname{dist}(y, z)))\\
=&~ \psi(\operatorname{CF}(\Gamma_1 \setminus (y, x], \infty)) \oplus (-2\operatorname{dist}(y,z)).
\end{align*}
On the other hand, 
\begin{align*}
&~ \psi(\operatorname{CF}(\Gamma_1 \setminus (y, x], \infty))(z^{\prime}) \oplus (-2\operatorname{dist}(y, z)) \oplus (-\operatorname{dist}(y, z)) \odot \psi(\operatorname{CF}(\{ z \}, \operatorname{dist}(y, z)))(z^{\prime})\\
=&~ -\operatorname{dist}(y,z)\\
>&~ \psi(\operatorname{CF}(\Gamma_1 \setminus (y, x], \infty))(z^{\prime}) \oplus (-2\operatorname{dist}(y,z)),
\end{align*}
which is a contradiction.
In conclusion, for any $z^{\prime} \in \operatorname{Max}_z^{\prime}$, $\psi(\operatorname{CF}(\Gamma_1 \setminus (y, x], \infty))^{\odot (-1)}(z^{\prime}) = \operatorname{dist}(y, z)$.
\end{proof}

\begin{cl}
	\label{cl13}
For $\psi(\operatorname{CF}(\Gamma \setminus (y, x], \infty))^{\odot (-1)}$ in Claim \ref{cl8} and any $z \in (y, x)$, $\operatorname{Max}_z^{\prime} \supset \{ x^{\prime} \in \Gamma_2 \,|\, \psi(\operatorname{CF}(\Gamma_1 \setminus (y, x], \infty))^{\odot (-1)}(x^{\prime}) = \operatorname{dist}(y, z) \}$.
\end{cl}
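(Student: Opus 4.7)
The plan is to derive Claim~\ref{cl13} from a single algebraic identity in $\operatorname{Rat}(\Gamma_1)$, without any boundary or perturbation argument of the sort used in Claim~\ref{cl5}. Write $f := \operatorname{CF}(\Gamma_1 \setminus (y, x], \infty)^{\odot(-1)}$ and $d := \operatorname{dist}(y, z)$; note that $f(p) = \operatorname{dist}(y, p)$ on the arc $(y, x]$ and $f \equiv 0$ on $\Gamma_1 \setminus (y, x]$.

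The identity I would establish is
$$f \oplus (2d \odot f^{\odot(-1)}) = f \oplus (d \odot \operatorname{CF}(\{z\}, d)^{\odot(-1)}).$$
This is checked pointwise on $\Gamma_1$ using two geometric facts: on the edge $(y, x]$ one has $\operatorname{dist}(z, p) = |f(p) - d|$, and on $\Gamma_1 \setminus (y, x]$ every path from $z$ has to leave the arc through $y$, so $\operatorname{dist}(z, p) \ge d$. A case-by-case computation shows both sides equal $2d - f$ on $[y, z]$, equal $f$ on $[z, x]$, and equal the constant $2d$ on $\Gamma_1 \setminus (y, x]$ (on $[z, x]$ one splits further at the locus $f = 2d$, where the cap in $\operatorname{CF}(\{z\}, d)$ activates).

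Once the identity is in hand, I apply $\psi$ and evaluate at an arbitrary $x' \in \Gamma_2$ with $\psi(f)(x') = d$. The left-hand side becomes $\max(d, 2d - d) = d$, while the right-hand side equals $\max(d, d - \psi(\operatorname{CF}(\{z\}, d))(x'))$. By Lemma~\ref{lem4} applied to $\operatorname{CF}(\{z\}, d)$, whose maximum value on $\Gamma_1$ is $0$, we have $\psi(\operatorname{CF}(\{z\}, d))(x') \le 0$; hence the right-hand side is at least $d$, and the equality with the left-hand side pins it down to exactly $d$. This forces $\psi(\operatorname{CF}(\{z\}, d))(x') = 0$, which is precisely the defining condition $x' \in \operatorname{Max}_z'$.

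The main obstacle is guessing the identity. The guiding intuition is that $2d \odot f^{\odot(-1)} = 2d - f$ reflects $f$ about the value $d$ it attains at $z$, while $d \odot \operatorname{CF}(\{z\}, d)^{\odot(-1)} = d + \min(\operatorname{dist}(z, \cdot), d)$ describes the same ``V-shape of height $[d, 2d]$ centered at $z$'' built instead from a single chip-firing move at $z$; the two descriptions happen to agree on $\Gamma_1$ because $f$ itself is the distance from $y$ on the edge. With this identity in hand, the inclusion needed for Claim~\ref{cl13} drops out from one evaluation plus Lemma~\ref{lem4}, bypassing any local analysis of $\psi(f)$ near $x'$.
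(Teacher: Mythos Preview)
Your argument is correct and is notably cleaner than the paper's. The key identity
\[
f \oplus \bigl(2d \odot f^{\odot(-1)}\bigr) \;=\; f \oplus \bigl(d \odot \operatorname{CF}(\{z\}, d)^{\odot(-1)}\bigr)
\]
checks out pointwise exactly as you describe (including the split at $f = 2d$ on $[z,x]$), and after applying $\psi$ and evaluating at any $x'$ with $\psi(f)(x') = d$, the left-hand side equals $d$ while the right-hand side equals $d - \psi(\operatorname{CF}(\{z\}, d))(x')$ thanks to Lemma~\ref{lem4}; this forces $\psi(\operatorname{CF}(\{z\}, d))(x') = 0$, i.e.\ $x' \in \operatorname{Max}_z'$.

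By contrast, the paper argues by contradiction: it takes a hypothetical point in $\{\psi(f) = d\} \setminus \operatorname{Max}_z'$, first adjusts $y$ (via the equality~(\ref{eqn2}) from Claim~\ref{cl2}) so that $d = \operatorname{dist}(y,z)$ plays the role of $\varepsilon$ in Claim~\ref{cl5} for $\psi(\operatorname{CF}(\{z\}, d))$, then perturbs to a nearby $w'$ with $d < \psi(f)(w') < \tfrac{3}{2}d$ and $\psi(\operatorname{CF}(\{z\}, d))(w') = -d$, and finally plugs $w'$ into a more elaborate identity built from $\bigl(f \oplus d\bigr)^{\odot(-1)}$ and $\bigl(f^{\odot 2} \oplus 2d\bigr)^{\odot(-1)} \odot d$. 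Your approach eliminates the perturbation step entirely, does not invoke Claim~\ref{cl5} or the special choice of $y$ from Claim~\ref{cl8} (so it in fact works for every finite $y$ on the leaf edge), and uses only Lemma~\ref{lem4}. The paper's route has the minor virtue that its identity is structurally parallel to the one in Claim~\ref{cl5}, but yours is both simpler to verify and simpler to exploit.
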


\begin{proof}
Assume that there exists $z \in (y, x)$ such that $\operatorname{Max}_z^{\prime} \not\supset \{ x^{\prime} \in \Gamma_2 \,|\, \psi(\operatorname{CF}(\Gamma_1 \setminus (y, x], \infty))^{\odot (-1)}(x^{\prime}) = \operatorname{dist}(y, z) \}$.
For any $y_1 \in (y, z)$, since
\begin{align*}
&~ \operatorname{CF}(\Gamma_1 \setminus (y_1, x], \infty)^{\odot (-1)} \odot \operatorname{dist}(y, y_1)\\
=&~ \operatorname{CF}(\Gamma_1 \setminus (y, x], \infty)^{\odot (-1)} \oplus \operatorname{dist}(y, y_1),
\end{align*}
we have
\begin{align*}
&~ \{ x^{\prime} \in \Gamma_2 \,|\, \psi(\operatorname{CF}(\Gamma_1 \setminus (y, x], \infty))^{\odot (-1)}(x^{\prime}) = \operatorname{dist}(y, z) \}\\
=&~ \{ x^{\prime} \in \Gamma_2 \,|\, \psi(\operatorname{CF}(\Gamma_1 \setminus (y_1, x], \infty))^{\odot (-1)}(x^{\prime}) = \operatorname{dist}(y_1, z) \}.
\end{align*}
As $\psi(\operatorname{CF}(\Gamma_1 \setminus (y, x], \infty))^{\odot (-1)}$ satisfies all of the conditions in Claim \ref{cl8}, so does $\psi(\operatorname{CF}(\Gamma_1 \setminus (y_1, x], \infty))^{\odot (-1)}$ by the equality $(\ref{eqn2})$ in the proof of Claim \ref{cl2}.
Thus, by replacing $y$ with $y_1$ if we need, we can assume that $\operatorname{dist}(y, z)$ plays the role of $\varepsilon$ in Claim \ref{cl5} for $\psi(\operatorname{CF}(\{ z \}, \operatorname{dist}(y, z)))$
and that there exists a point $w^{\prime}$ of some half-edge of a point of $\{ x^{\prime} \in \Gamma_2 \,|\, \psi(\operatorname{CF}(\Gamma_1 \setminus (y, x], \infty))^{\odot (-1)}(x^{\prime}) = \operatorname{dist}(y, z) \} \setminus \operatorname{Max}_z^{\prime} \not= \varnothing$ such that $\operatorname{dist}(y, z) < \psi(\operatorname{CF}(\Gamma_1 \setminus (y, x], \infty))^{\odot (-1)}(w^{\prime}) < \frac{3}{2} \operatorname{dist}(y, z)$ and $\psi(\operatorname{CF}(\{ z \}, \operatorname{dist}(y, z)))(w^{\prime}) = - \operatorname{dist}(y, z)$.
Hence we have
\begin{align*}
&~ \left( \psi(\operatorname{CF}(\Gamma_1 \setminus (y, x], \infty))^{\odot (-1)}(w^{\prime}) \oplus \operatorname{dist}(y,z) \right)^{\odot (-1)}\\
&~ \oplus (-\operatorname{dist}(y, z)) \odot \psi(\operatorname{CF}(\{ z \}, \operatorname{dist}(y, z)))(w^{\prime})\\
=&~ \psi(\operatorname{CF}(\Gamma_1 \setminus (y, x], \infty))(w^{\prime}) \oplus (- \operatorname{dist}(y, z)) \odot (- \operatorname{dist}(y, z))\\
=&~ \psi(\operatorname{CF}(\Gamma_1 \setminus (y, x], \infty))(w^{\prime})\\
\not=&~ \psi(\operatorname{CF}(\Gamma_1 \setminus (y,x], \infty))^{\odot2}(w^{\prime}) \odot \operatorname{dist}(y, z)\\
=&~ \psi(\operatorname{CF}(\Gamma_1 \setminus (y, x], \infty))^{\odot2}(w^{\prime}) \odot \operatorname{dist}(y, z) \oplus (-\operatorname{dist}(y, z)) \odot (- \operatorname{dist}(y, z))\\
=&~ \left( \psi( \operatorname{CF}(\Gamma_1 \setminus (y, x], \infty))^{\odot (-2)} (w^{\prime}) \oplus 2\operatorname{dist}(y,z) \right)^{\odot (-1)} \odot \operatorname{dist}(y, z)\\ 
&~\oplus (- \operatorname{dist}(y, z)) \odot \psi(\operatorname{CF}(\{ z \}, \operatorname{dist}(y, z))) (w^{\prime}).
\end{align*}
On the other hand, since
\begin{align*}
&~ (\operatorname{CF}(\Gamma_1 \setminus (y, x], \infty)^{\odot (-1)} \oplus \operatorname{dist}(y,z))^{\odot (-1)} 
\oplus (-\operatorname{dist}(y, z)) \odot \operatorname{CF}(\{ z \}, \operatorname{dist}(y, z))\\
=&~ (\operatorname{CF}(\Gamma_1 \setminus (y, x], \infty)^{\odot (-2)} \oplus 2\operatorname{dist}(y,z))^{\odot (-1)} \odot \operatorname{dist}(y, z)\\
&~ \oplus (- \operatorname{dist}(y, z)) \odot (\operatorname{CF}(\{ z \}, \operatorname{dist}(y, z)),
\end{align*}
we have
\begin{align*}
&~ \left( \psi( \operatorname{CF}(\Gamma_1 \setminus (y, x], \infty))^{\odot (-1)} \oplus \operatorname{dist}(y,z) \right)^{\odot (-1)}\\
&~ \oplus (-\operatorname{dist}(y, z)) \odot \psi(\operatorname{CF}(\{ z \}, \operatorname{dist}(y, z)) )\\
=&~ \left( \psi( \operatorname{CF}(\Gamma_1 \setminus (y, x], \infty))^{\odot (-2)} \oplus 2\operatorname{dist}(y,z) \right)^{\odot (-1)} \odot \operatorname{dist}(y, z)\\
&~ \oplus (- \operatorname{dist}(y, z)) \odot \psi(\operatorname{CF}(\{ z \}, \operatorname{dist}(y, z))),
\end{align*}
which is a contradiction.
\end{proof}

By Claims \ref{cl6}, \ref{cl5}, \ref{cl12}, \ref{cl13}, $\varphi$ is continuous.

\begin{cl}
	\label{cl7}
For any $x \in \Gamma_{1\infty}$, $\operatorname{Max}_x^{\prime} \subset \Gamma_{2\infty}$.
\end{cl}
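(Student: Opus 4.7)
The plan is to observe that $\operatorname{Max}_x^{\prime}$ is by definition the locus in $\Gamma_2$ where the element $\psi(\operatorname{CF}(\Gamma_1 \setminus (y, x], \infty))^{\odot(-1)} \in \operatorname{Rat}(\Gamma_2)$ attains the value $+\infty$, and then to appeal to the definition of a rational function on a tropical curve, according to which the values $\pm\infty$ may be attained only at points at infinity. Thus the proof reduces to checking that the function in question genuinely is an element of $\operatorname{Rat}(\Gamma_2)$.

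First I would fix a finite point $y$ on the unique edge of $\Gamma_1$ incident to $x$, so that by the definition preceding the statement we have
\[
\operatorname{Max}_x^{\prime} = \{\, x^{\prime} \in \Gamma_2 \mid \psi(\operatorname{CF}(\Gamma_1 \setminus (y, x], \infty))^{\odot(-1)}(x^{\prime}) = \infty \,\}.
\]
Next I would note that the chip firing move $\operatorname{CF}(\Gamma_1 \setminus (y, x], \infty)$ lies in $\operatorname{Rat}(\Gamma_1)$ and has maximum value $0$, since it vanishes on $\Gamma_1 \setminus (y, x]$, which is nonempty. Hence, by Lemma \ref{lem4}, $\psi(\operatorname{CF}(\Gamma_1 \setminus (y, x], \infty)) \in \operatorname{Rat}(\Gamma_2)$ also has maximum value $0$; in particular it is not the constant $-\infty$. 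Its $\odot$-inverse in the semifield $\operatorname{Rat}(\Gamma_2)$ is therefore a well-defined element of $\operatorname{Rat}(\Gamma_2)$.

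Finally, because a rational function on $\Gamma_2$ can attain the value $+\infty$ only at points of $\Gamma_{2\infty}$, any $x^{\prime} \in \operatorname{Max}_x^{\prime}$ must lie in $\Gamma_{2\infty}$, which gives the claim. No real obstacle is anticipated here; the argument is a direct unwinding of the definitions, with Lemma \ref{lem4} invoked only to guarantee that $\psi(\operatorname{CF}(\Gamma_1 \setminus (y, x], \infty))$ is not the zero of $\operatorname{Rat}(\Gamma_2)$, so that its $\odot$-inverse makes sense and is a rational function.
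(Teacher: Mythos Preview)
Your argument is correct and matches the paper's one-line proof (``By Lemma~\ref{lem4}, it is clear''): both rest on the fact that $\psi(\operatorname{CF}(\Gamma_1 \setminus (y,x],\infty))$ lies in $\operatorname{Rat}(\Gamma_2)$ and hence can take the value $-\infty$ (equivalently, its $\odot$-inverse can take $+\infty$) only at points of $\Gamma_{2\infty}$. Your invocation of Lemma~\ref{lem4} to rule out the constant $-\infty$ is a reasonable precaution, though injectivity of $\psi$ would also suffice for that step.
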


\begin{proof}
By Lemma \ref{lem4}, it is clear.
\end{proof}

\begin{cl}
	\label{cl10}
$\bigcup_{x \in \Gamma_1 \setminus \Gamma_{1\infty}} \operatorname{Max}_x^{\prime} \supset \Gamma_2 \setminus \Gamma_{2\infty}$.
\end{cl}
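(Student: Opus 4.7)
The plan is to prove Claim \ref{cl10} by a connectedness argument. Note first that $\Gamma_2 \setminus \Gamma_{2\infty}$ is connected, since $\Gamma_2$ is a connected tropical curve and each point of $\Gamma_{2\infty}$ is an isolated leaf end whose removal preserves connectedness. Setting
\begin{align*}
W := \Bigl(\bigcup_{x \in \Gamma_1 \setminus \Gamma_{1\infty}} \operatorname{Max}_x^{\prime}\Bigr) \cap (\Gamma_2 \setminus \Gamma_{2\infty}),
\end{align*}
it suffices to show that $W$ is nonempty, open in $\Gamma_2 \setminus \Gamma_{2\infty}$, and closed in $\Gamma_2 \setminus \Gamma_{2\infty}$; connectedness then forces $W = \Gamma_2 \setminus \Gamma_{2\infty}$, which is the claim.

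For nonemptiness and openness I would fix $x_0 \in \Gamma_1 \setminus \Gamma_{1\infty}$ and pick $\varepsilon > 0$ small enough that Claims \ref{cl3}, \ref{cl6} and \ref{cl5} all apply. By Claim \ref{cl3} the function $\psi(\operatorname{CF}(\{x_0\}, \varepsilon))$ attains its maximum $0$ on $\operatorname{Max}_{x_0}^{\prime}$ and equals $-\varepsilon$ outside the $\varepsilon$-neighborhood of $\operatorname{Max}_{x_0}^{\prime}$, and its piecewise-linear transit between these two regions supplies finite points of $\Gamma_2$ at which the function equals some $-d \in (-\varepsilon, 0)$; Claim \ref{cl5} then places such a point in $\operatorname{Max}_y^{\prime}$ for a finite $y \in \Gamma_1$ with $\operatorname{dist}(x_0, y) = d$, giving $W \neq \emptyset$. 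Openness follows from the same machinery: for any $x^{\prime} \in W$ with $x^{\prime} \in \operatorname{Max}_x^{\prime}$ and $x \in \Gamma_1 \setminus \Gamma_{1\infty}$, the open set $\{x^{\prime\prime} \in \Gamma_2 \,|\, \psi(\operatorname{CF}(\{x\}, \varepsilon))(x^{\prime\prime}) > -\varepsilon\}$ is a neighborhood of $x^{\prime}$ contained (via Claim \ref{cl5} and the fact that the $\varepsilon$-ball around $x$ in $\Gamma_1$ consists of two-valent hence finite points) in $\bigcup_{y \in \Gamma_1 \setminus \Gamma_{1\infty}} \operatorname{Max}_y^{\prime}$.

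For closedness I would take a sequence $x_n^{\prime} \in W$ with $x_n^{\prime} \to x^{\prime} \in \Gamma_2 \setminus \Gamma_{2\infty}$ and $x_n^{\prime} \in \operatorname{Max}_{x_n}^{\prime}$ for $x_n \in \Gamma_1 \setminus \Gamma_{1\infty}$, and use compactness of $\Gamma_1$ to pass to a subsequence with $x_n \to x \in \Gamma_1$. If $x \in \Gamma_1 \setminus \Gamma_{1\infty}$, then with $\varepsilon > 0$ chosen for $x$ as in Claim \ref{cl5}, Claim \ref{cl6} yields $\psi(\operatorname{CF}(\{x\}, \varepsilon))(x_n^{\prime}) = -\operatorname{dist}(x, x_n)$ for large $n$; taking the limit using continuity of the rational function gives $\psi(\operatorname{CF}(\{x\}, \varepsilon))(x^{\prime}) = 0$, so $x^{\prime} \in \operatorname{Max}_x^{\prime} \subseteq W$. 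If instead $x \in \Gamma_{1\infty}$, I would fix a finite $y$ on the leaf edge incident to $x$; for large $n$, $x_n \in (y, x)$, and Claim \ref{cl12} gives $\psi(\operatorname{CF}(\Gamma_1 \setminus (y, x], \infty))^{\odot (-1)}(x_n^{\prime}) = \operatorname{dist}(y, x_n) \to \infty$, contradicting the finiteness of this rational function at the finite point $x^{\prime}$. The main obstacle is precisely this last case: ruling out $x \in \Gamma_{1\infty}$ hinges on Claim \ref{cl12} to translate the membership $x_n^{\prime} \in \operatorname{Max}_{x_n}^{\prime}$ into a precise, diverging value of a single $\psi$-image function at $x_n^{\prime}$, which is then incompatible with the fact that rational functions on $\Gamma_2$ take only finite values at points of $\Gamma_2 \setminus \Gamma_{2\infty}$.
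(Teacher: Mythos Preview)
Your argument is correct. Both your proof and the paper's rest on the same connectedness principle --- showing that $\bigcup_{x \in \Gamma_1 \setminus \Gamma_{1\infty}} \operatorname{Max}_x^{\prime}$ is clopen in $\Gamma_2 \setminus \Gamma_{2\infty}$ --- but they diverge in how the boundary (equivalently, closedness) is handled. The paper argues directly: it picks a boundary point $z^{\prime}$ of the union in $\Gamma_2 \setminus \Gamma_{2\infty}$, asserts without further justification that $z^{\prime}$ lies in the boundary of a single $\operatorname{Max}_z^{\prime}$ with $z \in \Gamma_1 \setminus \Gamma_{1\infty}$, and then applies Claim~\ref{cl5} to $z$ to cover all half-edges of $z^{\prime}$, contradicting that $z^{\prime}$ is a boundary point. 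Your route is more explicit: you extract a convergent subsequence of the base points $x_n \in \Gamma_1$ via compactness, and then separately dispose of the case $x \in \Gamma_{1\infty}$ using Claim~\ref{cl12}. This last step is precisely what fills the gap the paper leaves implicit --- it supplies the reason the relevant limit point in $\Gamma_1$ can be taken finite. The trade-off is that your proof is longer and invokes one additional ingredient (Claim~\ref{cl12}), while the paper's version is terser but sketches over exactly the point you flag as the main obstacle. One minor simplification: nonemptiness of $W$ follows directly from the fact that $\operatorname{Max}_{x_0}^{\prime}$ already contains finite points of $\Gamma_2$ (otherwise, by Claim~\ref{cl3}, $\psi(\operatorname{CF}(\{x_0\}, \varepsilon))$ would be identically $-\varepsilon$, contradicting Lemma~\ref{lem4}), so the detour through a transit value $-d$ is not needed.
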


\begin{proof}
Assume that $\bigcup_{x \in \Gamma_1 \setminus \Gamma_{1\infty}} \operatorname{Max}_x^{\prime} \not\supset \Gamma_2 \setminus \Gamma_{2\infty}$.
Since the boundary set of $\bigcup_{x \in \Gamma_1 \setminus \Gamma_{1\infty}} \operatorname{Max}_x^{\prime}$ in $\Gamma_2 \setminus \Gamma_{2\infty}$ is not empty, for any element $z^{\prime}$ in it, there exists $z \in \Gamma_1 \setminus \Gamma_{1\infty}$ such that the boundary set of $\operatorname{Max}_z^{\prime}$ in $\Gamma_2 \setminus \Gamma_{2\infty}$ contains $z^{\prime}$.
Hence $\psi(\operatorname{CF}(\{ z \}, \varepsilon))$ takes values less than zero on any half-edge of $z^{\prime}$ not in $\operatorname{Max}_z^{\prime}$ with a positive number $\varepsilon$ by Lemma \ref{lem4}.
On the other hand, by Claim \ref{cl5}, when $\varepsilon$ is sufficiently small, all such half-edges are contained in $\operatorname{Max}_{z_1}^{\prime}$ for any point $z_1$ near $z$, which is a contradiction.
\end{proof}

\begin{cl}
	\label{cl14}
$\bigcup_{x \in \Gamma_1} \operatorname{Max}_x^{\prime} = \Gamma_2$.
\end{cl}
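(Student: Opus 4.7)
The plan is to complement Claim \ref{cl10} by verifying that every point at infinity of $\Gamma_2$ also lies in $\bigcup_{x \in \Gamma_1} \operatorname{Max}_x^{\prime}$. Fix $x^{\prime} \in \Gamma_{2\infty}$ and let $e^{\prime}$ be the unique (leaf) edge of $\Gamma_2$ incident to $x^{\prime}$; choose a sequence $(y_n^{\prime})_{n \ge 1}$ of finite points on $e^{\prime}$ with $y_n^{\prime} \to x^{\prime}$. By Claim \ref{cl10} each $y_n^{\prime}$ lies in $\operatorname{Max}_{z_n}^{\prime}$ for some $z_n \in \Gamma_1 \setminus \Gamma_{1\infty}$, and by compactness of $\Gamma_1$ I may pass to a subsequence along which $z_n \to z$ for some $z \in \Gamma_1$.

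Next I split into two cases according to the location of $z$. If $z \in \Gamma_1 \setminus \Gamma_{1\infty}$, I pick $\varepsilon > 0$ small enough that Claim \ref{cl3} applies to $\psi(\operatorname{CF}(\{ z \}, \varepsilon))$; for $n$ large enough that $\operatorname{dist}(z, z_n) < \varepsilon$, Claim \ref{cl6} forces $\psi(\operatorname{CF}(\{ z \}, \varepsilon))(y_n^{\prime}) = -\operatorname{dist}(z, z_n)$, which tends to $0$. Continuity of the rational function $\psi(\operatorname{CF}(\{ z \}, \varepsilon))$ on $\Gamma_2$ then yields $\psi(\operatorname{CF}(\{ z \}, \varepsilon))(x^{\prime}) = 0$, so $x^{\prime} \in \operatorname{Max}_z^{\prime}$. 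If instead $z \in \Gamma_{1\infty}$, I fix any finite point $y$ on the edge of $\Gamma_1$ incident to $z$; for $n$ large, $z_n$ lies in the open interval $(y, z)$, and Claim \ref{cl12} gives $\psi(\operatorname{CF}(\Gamma_1 \setminus (y, z], \infty))^{\odot (-1)}(y_n^{\prime}) = \operatorname{dist}(y, z_n)$. Since $\operatorname{dist}(y, z_n) \to \operatorname{dist}(y, z) = \infty$, continuity of this rational function (with the convention that $\pm \infty$ is taken only at points of $\Gamma_{2\infty}$) forces the value at $x^{\prime}$ to be $\infty$, so $x^{\prime} \in \operatorname{Max}_z^{\prime}$.

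In either case $x^{\prime} \in \bigcup_{x \in \Gamma_1} \operatorname{Max}_x^{\prime}$, and combined with Claim \ref{cl10} and the trivial inclusion $\bigcup_{x \in \Gamma_1} \operatorname{Max}_x^{\prime} \subset \Gamma_2$ this gives the desired equality. The main subtlety is choosing the correct chip firing move in each case: finite limits are detected by $\operatorname{CF}(\{ z \}, \varepsilon)$ via Claim \ref{cl6}, whereas limits at infinity must be detected by $\operatorname{CF}(\Gamma_1 \setminus (y, z], \infty)^{\odot (-1)}$ via Claim \ref{cl12}. Claims \ref{cl4}, \ref{cl9}, and \ref{cl11} guarantee that the subsequential limit $z$ is in fact unique (two different limits would place $x^{\prime}$ in the intersection of two disjoint fibres), which is also what underlies the continuity of $\varphi$ asserted just before Claim \ref{cl7}.
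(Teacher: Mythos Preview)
Your argument is correct and follows essentially the same route as the paper: reduce via Claim~\ref{cl10} to points $x^{\prime}\in\Gamma_{2\infty}$, approximate by finite points $y_n^{\prime}$ with $y_n^{\prime}\in\operatorname{Max}_{z_n}^{\prime}$, pass to a convergent (sub)sequence $z_n\to z$, and use Claim~\ref{cl6} (finite $z$) or Claim~\ref{cl12} (infinite $z$) together with continuity to place $x^{\prime}$ in $\operatorname{Max}_z^{\prime}$. The only cosmetic difference is that the paper restricts from the outset to $x_i^{\prime}\notin\bigcup_{x\in\Gamma_1\setminus\Gamma_{1\infty}}\operatorname{Max}_x^{\prime}$ and therefore treats the finite-limit case as a contradiction, whereas you handle both cases uniformly; your direct continuity argument in Case~1 is in fact a bit cleaner than the paper's phrasing.
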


\begin{proof}
By Claim \ref{cl10}, $\left( \Gamma_2 \setminus \bigcup_{x \in \Gamma_1 \setminus \Gamma_{1\infty}} \operatorname{Max}_x^{\prime} \right) \subset \Gamma_{2\infty}$.
Let $x_1^{\prime}, \ldots, x_n^{\prime}$ be all of the distinct points of $\Gamma_2 \setminus \bigcup_{x \in \Gamma_1 \setminus \Gamma_{1\infty}} \operatorname{Max}_x^{\prime}$.
Let $e_i^{\prime}$ be the unique edge incident to $x_i^{\prime}$.

Let $\{ z_j^{\prime} \} \subset \Gamma_2 \setminus \Gamma_{2\infty}$ be a convergent sequence such that $z_j^{\prime} \to x_i^{\prime}$ as $j \to \infty$.
Since $\varphi$ is continuous on $\bigcup_{x \in \Gamma_1} \operatorname{Max}_x^{\prime}$ and $z_j^{\prime}$ is on $e_i^{\prime}$ for each sufficiently large number $j$, the sequence $\{ \varphi(z_j^{\prime}) \}$ is convergent.
The limit $x$ of $\{ \varphi(z_j^{\prime}) \}$ must be in $\Gamma_{1\infty}$.
In fact, if $x$ is not in $\Gamma_{1\infty}$, then there exists a sufficiently small positive number $\varepsilon > 0$ as in Claim \ref{cl5} for $\psi(\operatorname{CF}(\{ x \}, \varepsilon))$.
By Claims \ref{cl6}, \ref{cl5}, there exists a sufficiently large number $M$ such that for any $j > M$, $- \varepsilon < \psi(\operatorname{CF}(\{ x \}, \varepsilon))(z_j^{\prime}) < 0$.
This means that the sequence $\{ z_j^{\prime} \}$ is not convergent to a point at infinity, which is a contradiction.
Thus $x \in \Gamma_{1\infty}$.
We can choose $z_j^{\prime}$ whose image by $\varphi$ is on the unique edge incident to $x$ and such that $\psi(\operatorname{CF}(\Gamma_1 \setminus (\varphi(z_j^{\prime}), x], \infty))^{\odot(-1)}$ satisfies all of the conditions in Claim \ref{cl8}.
For any $w \in (\varphi(z_j^{\prime}), x)$, since
\begin{align*}
&~ \operatorname{CF}(\Gamma_1 \setminus (\varphi(z_j^{\prime}), x], \infty)^{\odot(-1)} \oplus \operatorname{dist}(\varphi(z_j^{\prime}), w) \odot \operatorname{CF}(\{ w \}, \operatorname{dist}(\varphi(z_j^{\prime}), w))\\
=&~ \operatorname{CF}(\Gamma_1 \setminus (\varphi(z_j^{\prime}), x], \infty)^{\odot(-1)},
\end{align*}
we have
\begin{align*}
&~ \psi(\operatorname{CF}(\Gamma_1 \setminus (\varphi(z_j^{\prime}), x], \infty))^{\odot(-1)} \oplus \operatorname{dist}(\varphi(z_j^{\prime}), w) \odot \psi(\operatorname{CF}(\{ w \}, \operatorname{dist}(\varphi(z_j^{\prime}), w)))\\
=&~ \psi(\operatorname{CF}(\Gamma_1 \setminus (\varphi(z_j^{\prime}), x], \infty))^{\odot(-1)}.
\end{align*}
This means that $x_i^{\prime} \in \operatorname{Max}_x^{\prime}$.
In conclusion, $\bigcup_{x \in \Gamma_1} \operatorname{Max}_x^{\prime} = \Gamma_2$.
\end{proof}

\begin{cl}
	\label{cl15}
$\varphi$ is a surjective morphism from $\Gamma_2$ to $\Gamma_1$.
\end{cl}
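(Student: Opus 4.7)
The plan is to split Claim \ref{cl15} into its two assertions: surjectivity of $\varphi$ and the morphism property.

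\textbf{Surjectivity.} For each $x \in \Gamma_1$ I would show $\operatorname{Max}_x^{\prime} \neq \varnothing$. Let $h_x$ be the chip firing move whose $\psi$-image defines $\operatorname{Max}_x^{\prime}$: namely $\operatorname{CF}(\{x\}, l)$ if $x \in \Gamma_1 \setminus \Gamma_{1\infty}$ and $\operatorname{CF}(\Gamma_1 \setminus (y, x], \infty)^{\odot (-1)}$ if $x \in \Gamma_{1\infty}$. Then $h_x$ attains its maximum ($0$ or $\infty$) exactly at $x$, so by Lemma \ref{lem4} the function $\psi(h_x)$ has the same maximum value. Since $\Gamma_2$ is compact and $\psi(h_x)$ is continuous, this maximum is attained, giving a point in $\operatorname{Max}_x^{\prime}$.

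\textbf{Morphism property.} I would start from the canonical loopless model $(G^{\prime}, l^{\prime})$ of $\Gamma_1$ and construct a loopless model $(G, l)$ of $\Gamma_2$ whose vertex set contains $\varphi^{-1}(V(G^{\prime}))$ together with the finitely many slope-change points of each $\psi(\operatorname{CF}(\{x\}, \varepsilon))$ (for $x \in V(G^{\prime}) \setminus \Gamma_{1\infty}$ with $\varepsilon$ as in Claim \ref{cl5}) and each $\psi(\operatorname{CF}(\Gamma_1 \setminus (y, x], \infty))^{\odot(-1)}$ (for $x \in V(G^{\prime}) \cap \Gamma_{1\infty}$ with $y$ as in Claim \ref{cl8}); then I would enlarge $V(G^{\prime})$ to include the finite set $\varphi(V(G))$ and, if necessary, subdivide both models further to remove loops. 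By Claims \ref{cl10} and \ref{cl14}, every point of $\Gamma_2$ lies in a neighborhood controlled by one of these $\psi$-images, so each edge $e$ of $G$ is contained in such a neighborhood. On $e$, Claims \ref{cl5}, \ref{cl6}, \ref{cl12}, \ref{cl13} identify $\varphi(x^{\prime\prime})$ with the unique point of $\varphi(e)$ at distance $-\psi(h_x)(x^{\prime\prime})$ from the appropriate base vertex $x \in V(G^{\prime})$. Since $\psi(h_x)$ is affine with some nonpositive integer slope $-k_e$ on $e$, this yields
\[
\operatorname{dist}_{\varphi(e)}(\varphi(x^{\prime\prime}), \varphi(y^{\prime\prime})) = k_e \cdot \operatorname{dist}_e(x^{\prime\prime}, y^{\prime\prime})
\]
for all $x^{\prime\prime}, y^{\prime\prime} \in e$, and setting $\operatorname{deg}_e(\varphi) := k_e$ verifies the definition of morphism.

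\textbf{Main obstacle.} The delicate point will be the bookkeeping needed to stitch the local pictures coming from different choices of $x \in V(G^{\prime})$ into a coherent pair of models: one must verify that each edge of $G$ maps into a single edge (or contracts to a single vertex) of $G^{\prime}$, that the slope is constant along it, and that the whole process terminates without the need to iterate infinitely. Finiteness is guaranteed because each $\psi(h_x)$ has only finitely many affine pieces and $V(G^{\prime})$ is finite. Continuity of $\varphi$, established just after Claim \ref{cl13}, rules out $\varphi$ crossing a vertex of $G^{\prime}$ on the interior of an edge of $G$ since preimages of $V(G^{\prime})$ are already in $V(G)$, and slope constancy on each edge of $G$ follows because the slope changes of the relevant $\psi(h_x)$ have also been declared vertices of $G$ by construction.
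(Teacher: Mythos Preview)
Your surjectivity argument is fine and coincides with the paper's reasoning via Lemma \ref{lem4}.

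The morphism argument has a genuine gap. You use only the functions $\psi(h_x)$ for $x$ in the \emph{finite} set $V(G^{\prime})$, with the $\varepsilon$ supplied by Claim \ref{cl5}. But that $\varepsilon$ may be arbitrarily small, and outside the $\varepsilon$-neighbourhood of $\operatorname{Max}_x^{\prime}$ the function $\psi(\operatorname{CF}(\{x\},\varepsilon))$ is the constant $-\varepsilon$, carrying no information about $\varphi$. Hence your identification of $\varphi(x^{\prime\prime})$ via $-\psi(h_x)(x^{\prime\prime})$ is valid only on the small region where $\psi(h_x)>-\varepsilon$; for a segment of $\Gamma_2$ whose image lies deep in the interior of an edge of $G^{\prime}$, away from both endpoints, none of your finitely many $\psi(h_x)$ says anything, and you cannot conclude that $\varphi$ has constant integer expansion factor there. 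Your appeal to Claims \ref{cl10} and \ref{cl14} does not help: they assert that every point of $\Gamma_2$ lies in some $\operatorname{Max}_x^{\prime}$, but for $x\in\Gamma_1$, not for $x\in V(G^{\prime})$. Relatedly, you have not shown that $\varphi^{-1}(V(G^{\prime}))$, or even its boundary, is a finite set, so your proposed $V(G)$ may not define a model of $\Gamma_2$ at all.

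The paper avoids this by working from the $\Gamma_2$ side. After removing certain half-open intervals ending at points of $\Gamma_{2\infty}$ to obtain a compact metric graph $\widetilde{\Gamma_2}$, it lets $V_1^{\prime}$ be the set of points where $\varphi$ has differing expansion factors on adjacent half-edges and uses sequential compactness to show $V_1^{\prime}$ is finite: an accumulation point $x^{\prime}$ would eventually have the $x_i^{\prime}$ lying in the region $\{-\varepsilon<\psi(\operatorname{CF}(\{\varphi(x^{\prime})\},\varepsilon))<0\}$, where Claims \ref{cl5} and \ref{cl6} force the expansion factor to be locally constant, a contradiction. Only after $V_1^{\prime}$ is known to be finite are compatible models built, by taking $V_2^{\prime}=V_1^{\prime}$ together with the canonical loopless vertices of $\Gamma_2$, then $V=\varphi(V_2^{\prime})$ together with the canonical loopless vertices of $\Gamma_1$, and finally $V^{\prime}=V_2^{\prime}$ together with the boundary of $\varphi^{-1}(V)$. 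This compactness step is precisely what your ``finiteness is guaranteed'' paragraph glosses over.
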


\begin{proof}
By the discussion so far and Claim \ref{cl14}, $\varphi$ is a surjective continuous map from $\Gamma_2$ to $\Gamma_1$.
For $x \in \Gamma_1 \setminus \Gamma_{1\infty}$, by the definition of $\operatorname{Max}_x^{\prime}$, if $\operatorname{Max}_x^{\prime} \cap \Gamma_{2\infty} \not= \varnothing$, then for each $x^{\prime} \in \operatorname{Max}_x^{\prime} \cap \Gamma_{2\infty}$, $\operatorname{Max}_x^{\prime}$ contains the segment $(y^{\prime}, x^{\prime}]$ with a finite point $y^{\prime}$ on the unique edge incident to $x^{\prime}$.
Since $\Gamma_{2\infty}$ is finite, there are only a finite number of such points $x$ by Claims \ref{cl4}, \ref{cl9}, \ref{cl11}.
Also, by Claims \ref{cl12}, \ref{cl13}, for each point $x^{\prime}$ of $\bigcup_{x \in \Gamma_{1\infty}} \operatorname{Max}_x^{\prime}$, which is in $\Gamma_{2\infty}$ by Claim \ref{cl7}, there exists a finite point $y^{\prime}$ on the unique edge incedent to $x^{\prime}$ such that any $z^{\prime} \in (y^{\prime}, x^{\prime})$ is an isolated point of $\operatorname{Max}_{\varphi(z^{\prime})}^{\prime}$.
For each of these two kinds of $x^{\prime}$, we fix one $y^{\prime}$.
The closed subset $\widetilde{\Gamma_2}$ of $\Gamma_2$ obtained from $\Gamma_2$ by removing all intervals $(y^{\prime}, x^{\prime}]$ is a metric graph by Claim \ref{cl14}.
Since $\widetilde{\Gamma_2}$ is a compact metric space, it is sequentially compact.
Let $V_1^{\prime}$ be the set of all points each whose neighborhood contains half-edges where $\varphi$ has distinct expansion factors.

Assume that $V_1^{\prime}$ is an infinite set.
Then $\widetilde{\Gamma_2}$ has infinitely many points of $V^{\prime}_1$.
Every sequence in $\widetilde{\Gamma_2} \cap V^{\prime}_1$ has a convergent subsequence $\{ x_i^{\prime} \}$.
If $x_i^{\prime} \to x^{\prime}$ as $i \to \infty$, then $\varphi(x_i^{\prime}) \to \varphi(x^{\prime})$ as $i \to \infty$ by the continuity of $\varphi$.
For $x^{\prime}$, let $\varepsilon > 0$ be as in Claim \ref{cl5}.
There exist infinitely many $i$ such that $x_i^{\prime}$ is in $\{ y^{\prime} \in \Gamma_2 \,|\, - \varepsilon < \psi(\operatorname{CF}(\{ \varphi(x^{\prime}) \}, \varepsilon))(y^{\prime}) < 0 \}$ by Claim \ref{cl6}.
Since there are only finitely many non-two-valent points on a tropical curve, we can assume that this $x_i^{\prime}$ is two-valent.
By Claims \ref{cl6}, \ref{cl5}, there exists a neighborhood $U$ of $x_i^{\prime}$ where $\varphi$ has a constant expansion factor, which is a contradiction.
Hence $V_1^{\prime}$ must be a finite set.

Let $V_2^{\prime}$ be the union of $V_1^{\prime}$ and the set of vertices of the underlying graph of the canonical loopless model for $\Gamma_2$.
Let $V$ be the union of $\varphi(V_2^{\prime})$ and the set of vertices of the underlying graph of the canonical loopless model for $\Gamma_1$.
Let $V^{\prime}$ be the union of $V_2^{\prime}$ and the boundary set of $\varphi^{-1}(V)$ in $\Gamma_2$.
Then $V$ and $V^{\prime}$ determine loopless models for $\Gamma_1$ and $\Gamma_2$ respectively.
By the constructions of $V$ and $V^{\prime}$, we can check that $\varphi$ is a surjective morphism $\Gamma_2 \to \Gamma_1$ with these loopless models.
\end{proof}

The uniqueness of such $\varphi$ is clear by the definition of $\varphi$.
In conclusion, we have Theorem \ref{thm1}.

\begin{lemma}
	\label{lem7}
For $x \in \Gamma_1 \setminus \Gamma_{1\infty}$, $\varepsilon > 0$ such that $\psi(\operatorname{CF}(\{ x \}, \varepsilon))$ satisfies all of the conditions in Claim \ref{cl3} and any $d$ such that $0 < d < \varepsilon$, $\bigcup_{y \in \Gamma_1 : \operatorname{dist}(x, y) = d} \operatorname{Max}_y^{\prime} \supset \{ x^{\prime} \in \Gamma_2 \,|\, \psi(\operatorname{CF}(\{ x \}, \varepsilon))(x^{\prime}) = -d \}$.
\end{lemma}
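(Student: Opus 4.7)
The strategy is to reduce Lemma \ref{lem7} to Claim \ref{cl5} via equation (\ref{eqn1}) for small values of $d$, and then to handle the remaining range by combining Claim \ref{cl6}, the constant-slope condition from Claim \ref{cl3}, and the morphism $\varphi$ from Claim \ref{cl15}.

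First, by equation (\ref{eqn1}), $\psi(\operatorname{CF}(\{ x \}, \varepsilon)) = \psi(\operatorname{CF}(\{ x \}, \infty)) \oplus (-\varepsilon)$, so for any $d \in (0, \varepsilon)$ the set $\{ x^{\prime} \in \Gamma_2 \,|\, \psi(\operatorname{CF}(\{ x \}, \varepsilon))(x^{\prime}) = -d \}$ equals $\{ x^{\prime} \in \Gamma_2 \,|\, \psi(\operatorname{CF}(\{ x \}, \infty))(x^{\prime}) = -d \}$ and is therefore independent of the particular $\varepsilon > d$ chosen. By Claim \ref{cl5} there exists some $\varepsilon_0 > 0$ for which the $\supset$ inclusion holds for all $d \in (0, \varepsilon_0)$; by the independence just observed, this yields the desired inclusion for the given $\varepsilon$ on the range $d \in (0, \min(\varepsilon, \varepsilon_0))$.

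For $d \in [\min(\varepsilon, \varepsilon_0), \varepsilon)$, fix any $x^{\prime} \in \Gamma_2$ with $\psi(\operatorname{CF}(\{ x \}, \varepsilon))(x^{\prime}) = -d$; by Claim \ref{cl14} there is a unique $y_0 := \varphi(x^{\prime}) \in \Gamma_1$ with $x^{\prime} \in \operatorname{Max}_{y_0}^{\prime}$, and it suffices to show $\operatorname{dist}(x, y_0) = d$. The case $y_0 = x$ gives value $0 \not= -d$, a contradiction, and the case $y_0 \in \Gamma_1 \setminus \Gamma_{1\infty}$ with $\operatorname{dist}(x, y_0) \in (0, \varepsilon)$ is immediate from Claim \ref{cl6}. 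It remains to rule out $y_0 \in \Gamma_{1\infty}$ and $\operatorname{dist}(x, y_0) \geq \varepsilon$.

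The main obstacle is exactly these two remaining sub-cases, and the plan is to exploit Claim \ref{cl3} together with the morphism structure of $\varphi$. Since $\psi(\operatorname{CF}(\{ x \}, \varepsilon))(x^{\prime}) = -d \not= -\varepsilon$, Claim \ref{cl3} forces $x^{\prime} \in U^{\prime}$, the $\varepsilon$-neighborhood of $\operatorname{Max}_x^{\prime}$. On each connected component $C$ of $U^{\prime} \setminus \operatorname{Max}_x^{\prime}$, the constant slope must equal $-1$, because the shortest path in $\overline{C}$ from $\operatorname{Max}_x^{\prime}$ (where the function is $0$) to $\partial U^{\prime}$ (where the function is $-\varepsilon$) has length exactly $\varepsilon$; hence $\psi(\operatorname{CF}(\{ x \}, \varepsilon))(x^{\prime}) = -\operatorname{dist}(\operatorname{Max}_x^{\prime}, x^{\prime})$ on $U^{\prime}$, giving $\operatorname{dist}(\operatorname{Max}_x^{\prime}, x^{\prime}) = d < \varepsilon$. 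Tracing a minimizing geodesic in $\Gamma_2$ from a nearest point of $\operatorname{Max}_x^{\prime}$ to $x^{\prime}$ and applying Claim \ref{cl6} at each intermediate point (all of which lie at $\Gamma_2$-distance less than $\varepsilon$ from $\operatorname{Max}_x^{\prime}$) then forces the local expansion factors of $\varphi$ along this geodesic to equal $1$, so $\operatorname{dist}(x, y_0) = d < \varepsilon$, ruling out both remaining sub-cases.
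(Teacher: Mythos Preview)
Your argument hinges on the deduction that the constant slope of $\psi(\operatorname{CF}(\{x\},\varepsilon))$ on each connected component of $U'\setminus\operatorname{Max}'_x$ equals $-1$, and from this that $\psi(\operatorname{CF}(\{x\},\varepsilon))(x')=-\operatorname{dist}(\operatorname{Max}'_x,x')$ on $U'$. That deduction fails in general: the slope there is (minus) the local expansion factor of $\varphi$, which may be any positive integer. Take $\Gamma_1=[0,1]$, $\Gamma_2=[0,\tfrac12]$, $\varphi(t)=2t$, and $x=\tfrac12$; then $\psi(\operatorname{CF}(\{x\},\varepsilon))(t)=-\min\{2|t-\tfrac14|,\varepsilon\}$ has slope $\pm2$ near $t=\tfrac14$. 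You extract slope $-1$ from the clause that $U'$ is the metric $\varepsilon$-neighbourhood of $\operatorname{Max}'_x$, but that reading of Claim~\ref{cl3} is incompatible with this example: the function already equals $-\varepsilon$ on $(\tfrac14-\varepsilon,\tfrac14-\tfrac{\varepsilon}{2})$, which lies inside the $\varepsilon$-neighbourhood, so it does \emph{not} have a single constant slope on that component. Thus either your slope-$(-1)$ step is invalid, or the hypothesis of the lemma is vacuous whenever the expansion factor exceeds $1$ --- neither of which is what the paper intends. Consequently the identity $\operatorname{dist}(\operatorname{Max}'_x,x')=d$ is unavailable, and the final geodesic-tracing step (``forces the local expansion factors of $\varphi$ along this geodesic to equal $1$'') does not go through as written.

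The paper's proof proceeds differently and never uses the $\varepsilon$-neighbourhood description of $U'$. It first observes (from the proof of Claim~\ref{cl5}) that the set of $d\in(0,\varepsilon)$ for which the desired inclusion fails is discrete, takes its minimum $d_1$, and picks $z'$ in the level set $\{\psi(\operatorname{CF}(\{x\},\varepsilon))=-d_1\}$ but outside $\bigcup_{\operatorname{dist}(x,y)=d_1}\operatorname{Max}'_y$. Since $-d_1>-\varepsilon$, Claim~\ref{cl3} forces $z'\in\Gamma_2\setminus\Gamma_{2\infty}$, so Claim~\ref{cl10} produces $z\in\Gamma_1\setminus\Gamma_{1\infty}$ with $z'\in\operatorname{Max}'_z$; minimality of $d_1$ together with Claim~\ref{cl6} gives $\operatorname{dist}(x,z)>d_1$. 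Choosing $\delta>0$ small as in Claim~\ref{cl5} for $z$ and a nearby point $y'$ with $\psi(\operatorname{CF}(\{x\},\varepsilon))(y')>-d_1$ and $\psi(\operatorname{CF}(\{z\},\delta))(y')>-\delta$, the already-established cases give $\operatorname{dist}(x,\varphi(y'))<d_1$ and $\operatorname{dist}(z,\varphi(y'))<\delta$, whence $\operatorname{dist}(x,z)<d_1+\delta$ for arbitrarily small $\delta$, a contradiction.
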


\begin{proof}
By the proof of Claim \ref{cl5}, numbers $d$ such that $0 < d < \varepsilon$ and $\bigcup_{y \in \Gamma_1 : \operatorname{dist}(x, y) = d} \operatorname{Max}^{\prime}_y \not\supset \{ x^{\prime} \in \Gamma_2 \,|\, \psi(\operatorname{CF}(\{ x \}, \varepsilon))(x^{\prime}) = -d \}$ are discrete even if there exist.
Let $d_1$ be the minimum number of such $d$.
Then for any $l$ such that $0 < l < d_1$, we have $\bigcup_{y \in \Gamma_1 : \operatorname{dist}(x, y) = l} \operatorname{Max}^{\prime}_y = \{ x^{\prime} \in \Gamma_2 \,|\, \psi(\operatorname{CF}(\{ x \}, \varepsilon))(x^{\prime}) = -l \}$ by the definition of $d_1$ and Claim \ref{cl6}.
Let $z^{\prime} \in \{ x^{\prime} \in \Gamma_2 \,|\, \psi(\operatorname{CF}(\{ x \}, \varepsilon))(x^{\prime}) = -d_1 \} \setminus \bigcup_{y \in \Gamma_1 : \operatorname{dist}(x, y) = -d_1} \operatorname{Max}^{\prime}_y$.
Since $\psi(\operatorname{CF}(\{ x \}, \varepsilon))(z^{\prime}) = - d_1 > -\varepsilon$ and $\psi(\operatorname{CF}(\{ x\}, \varepsilon))$ satisfies all of the conditions in Claim \ref{cl3}, $z^{\prime} \in \Gamma_2 \setminus \Gamma_{2\infty}$.
By Claim \ref{cl10}, there exists $z \in \Gamma_1 \setminus \Gamma_{1\infty}$ such that $\operatorname{Max}^{\prime}_z \ni z^{\prime}$.
Then $\operatorname{dist}(x, z) > d_1$.
By Claims \ref{cl6}, \ref{cl5}, there exists $\delta > 0$ such that $\psi(\operatorname{CF}(\{ z \}, \delta))$ satisfies all of the conditions in Claim \ref{cl3} and for any $l$ such that $0 < l < \delta$, $\bigcup_{y \in \Gamma_1 : \operatorname{dist}(z, y) = l } \operatorname{Max}^{\prime}_y = \{ x^{\prime} \in \Gamma_2 \,|\, \psi(\operatorname{CF}(\{ z \}, \delta))(x^{\prime}) = -l \}$.
Since rational functions on tropical curves, there exists $y^{\prime} \in \Gamma_2 \setminus \Gamma_{2\infty}$ near $z^{\prime}$ such that $\psi(\operatorname{CF}(\{ x \}, \varepsilon))(y^{\prime}) > -d_1$ and $\psi(\operatorname{CF}(\{ z \}, \delta))(y^{\prime}) > -\delta$.
Then we have $d_1 < \operatorname{dist}(x, z) \le \operatorname{dist}(x, \varphi(y^{\prime}))  + \operatorname{dist}(\varphi(y^{\prime}), z) = -\psi(\operatorname{CF}(\{ x \}, \varepsilon))(y^{\prime}) -\psi(\operatorname{CF}(\{ z \}, \delta))(y^{\prime}) < d_1 + \delta$.
Since we can replace $\delta$ with an infinitesimal positive number, it is a contradiction.
\end{proof}

\begin{lemma}
	\label{lemma9}
Let $V$ be as in the proof of Claim \ref{cl15} and $(G, l)$ the model for $\Gamma_1$ such that $V(G) = V$.
Let $e \in E(G)$.
For any point $x$ of $e$ other than the endpoints $v, w$ (possibly $v = w$) of $e$, let $l_x$ be the minimum of $\operatorname{dist}(x, v)$ and $\operatorname{dist}(x, w)$.
Then $\psi(\operatorname{CF}(\{ x \}, l_x))$ satisfies all of the conditions in Claim \ref{cl3}.
\end{lemma}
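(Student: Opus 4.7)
The plan is to verify directly that $\psi(\operatorname{CF}(\{x\}, l_x))$ satisfies the conditions of Claim \ref{cl3} with parameter $\varepsilon = l_x$. Writing $g := \psi(\operatorname{CF}(\{x\}, \infty))$, the identity (\ref{eqn1}) gives $\psi(\operatorname{CF}(\{x\}, l_x)) = g \oplus (-l_x)$, so what must be shown is that $g$ has a constant slope on each connected component of the relevant neighborhood $U'$ of $\operatorname{Max}_x'$ minus $\operatorname{Max}_x'$, and that $g \le -l_x$ on $\Gamma_2 \setminus U'$.

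The decisive input is the choice of $V$ in the proof of Claim \ref{cl15}. Since $V(G) = V$ contains both $\varphi(V_2') \supset \varphi(V_1')$ together with the image of the vertex set of the canonical loopless model of $\Gamma_2$, and the vertex set of the canonical loopless model of $\Gamma_1$, every $y$ in the open interior $(v, w)$ of $e$ is two-valent in $\Gamma_1$ and has no preimage in $V_2'$. Hence every $y' \in \operatorname{Max}_y'$ is two-valent in $\Gamma_2$ and $\varphi$ has a locally constant expansion factor near $y'$. Because $x$ lies in the interior of $e$ and $l_x = \min(\operatorname{dist}(x, v), \operatorname{dist}(x, w))$, this regularity applies to every $y \in \Gamma_1$ with $\operatorname{dist}(x, y) < l_x$, and $\varphi^{-1}(\{y : 0 < \operatorname{dist}(x, y) < l_x\})$ decomposes into a finite disjoint union of open arcs, each emanating from some $x_i' \in \operatorname{Max}_x'$ along $\varphi^{-1}(e)$, on which $\varphi$ acts as multiplication by a constant positive integer $\operatorname{deg}_i$.

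I would then identify $g$ along each such arc by extending Lemma \ref{lem7}'s level-set correspondence. Let $\varepsilon^*$ be the supremum of $\varepsilon \in (0, l_x]$ for which the identity $g(y') = -\operatorname{dist}(x, \varphi(y'))$ holds on $\{y' : g(y') > -\varepsilon\}$; Claim \ref{cl3} and Lemma \ref{lem7} guarantee $\varepsilon^* > 0$. Suppose for contradiction $\varepsilon^* < l_x$: any failure at $\varepsilon^*$ must come from either (a) a slope change of $g$ at some $z'$ projecting into $(v, w)$ at distance $\varepsilon^*$ from $x$, or (b) a new branch in $\varphi^{-1}(\{y : \operatorname{dist}(x, y) = \varepsilon^*\})$ not in the arc decomposition above. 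Case (a) exhibits two distinct integer expansion factors of $\varphi$ on the half-edges at $z'$, forcing $z' \in V_1'$ and $\varphi(z') \in \varphi(V_1') \subset V$, contradicting $\varphi(z') \in (v, w)$; case (b) contradicts the segmental decomposition together with Claim \ref{cl10}. Hence $\varepsilon^* \ge l_x$, $g$ has constant slope $-\operatorname{deg}_i$ on each arc in $U'$, giving (i), and $g \le -l_x$ outside these arcs by Lemma \ref{lem4}, giving (ii).

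The main obstacle is rigorously establishing the link in (a): namely, identifying the local slope of $g$ at a preimage $z'$ of $(v, w)$ with the integer expansion factor of $\varphi$ at $z'$. This follows from Lemma \ref{lem7}'s level-set description applied to parameters slightly below $\varepsilon^*$ together with the absence of $V_1'$-points over $(v, w)$, and the detailed verification parallels the patterns of Claim \ref{cl5} and Lemma \ref{lem7}.
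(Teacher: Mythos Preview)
Your approach is genuinely different from the paper's, and the difference is worth spelling out. The paper does not use the geometric structure of $\varphi$ or the definition of $V_1'$ at all in this proof. It argues by contradiction, taking $\varepsilon$ maximal with the Claim~\ref{cl3} property and then writing down an explicit tropical identity in $\operatorname{Rat}(\Gamma_1)$, valid for every positive integer $n$,
\[
\left( \operatorname{CF}(\{ x \}, \varepsilon + \delta)^{\odot(-1)} \oplus \varepsilon \right)^{\odot(-1)}
= \left( \operatorname{CF}(\{ x \}, \varepsilon + \delta)^{\odot(-n)} \oplus n\varepsilon \right)^{\odot(-1)} \odot (n-1)\varepsilon
 \oplus (-\varepsilon) \odot \operatorname{CF}(\{ y_1 \}, \delta) \oplus (-\varepsilon) \odot \operatorname{CF}(\{ y_2 \}, \delta),
\]
pushes it through $\psi$, and evaluates at carefully chosen points with $\psi(\operatorname{CF}(\{x\},\varepsilon+\delta))$-value in $(-\varepsilon-\delta, -\varepsilon)$. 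This gives direct control of $\psi(\operatorname{CF}(\{x\},\varepsilon+\delta))$ \emph{beyond} the critical level, without ever needing to know how $\varphi$ behaves there.

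Your route is more geometric and reads naturally, but the step you flag as ``the main obstacle'' is a real gap as written. In case~(a) you assert that a slope change of $g$ at $z'$ on the level $\{g=-\varepsilon^*\}$ exhibits two distinct expansion factors of $\varphi$, hence $z'\in V_1'$. But Lemma~\ref{lem7} applied just below $\varepsilon^*$ only identifies the slope of $g$ with the expansion factor of $\varphi$ on the half-edge of $z'$ lying in $\{g>-\varepsilon^*\}$; on the other half-edge you have no link between the slope of $g$ and $\varphi$ at all, because that link is exactly the level-set correspondence you are trying to extend past $\varepsilon^*$. Knowing that $z'\notin V_1'$ tells you the two expansion factors of $\varphi$ agree; it does not tell you the two slopes of $g$ agree. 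So the contradiction does not close. The same circularity affects case~(b): ruling out a ``new branch'' at level $-\varepsilon^*$ requires knowing that $\{g=-\varepsilon^*\}\subset\operatorname{Max}'_{y_1}\cup\operatorname{Max}'_{y_2}$, which is again Lemma~\ref{lem7} at the parameter $\varepsilon^*$ itself---not available, since Claim~\ref{cl3} is only assumed for parameters strictly below $\varepsilon^*$.

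To repair this you would need some independent control of $g$ in a strip $\{-\varepsilon^*-\eta<g\le-\varepsilon^*\}$, and the cleanest way to get it is an identity of the paper's type (or its local analogue centered at $y_i$), which relates $\psi(\operatorname{CF}(\{x\},\varepsilon^*+\delta))$ on that strip to $\psi(\operatorname{CF}(\{y_i\},\delta))$ near $\operatorname{Max}'_{y_i}$. Once you introduce such an identity, your argument and the paper's converge.
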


\begin{proof}
Assume that the assertion does not hold.
There exist $e \in E(G)$ and a point $x$ of $e$ other than its endpoint(s) such that $\psi(\operatorname{CF}(\{ x \}, l_x))$ does not satisfy some condition in Claim \ref{cl3}.
By Claim \ref{cl3} and the equality (\ref{eqn1}) in the proof of Claim \ref{cl1}, there exists $\varepsilon$ such that $0 < \varepsilon < l_x$, $\psi(\operatorname{CF}(\{ x \}, \varepsilon))$ satisfies all of the conditions in Claim \ref{cl3} and for any $l > 0$, $\psi(\operatorname{CF}(\{ x \}, \varepsilon + l))$ does not satisfy some condition in Claim \ref{cl3}.
Let $y_1, y_2$ be the distinct points such that $\operatorname{dist}(x, y_i) = \varepsilon$.
Let $\delta > 0$ be such that $\psi(\operatorname{CF}(\{ y_i \}, \delta))$ satisfies all of the conditions in Claims \ref{cl3} and $\varepsilon + \delta \le l_x$ and $\operatorname{dist}(y_1, y_2) > 2\delta$.
Since
\begin{align*}
\operatorname{CF}(\{ x \}, \varepsilon + \delta) \oplus (- \varepsilon) \odot \operatorname{CF}(\{ y_i \}, \delta) = \operatorname{CF}(\{ x \}, \varepsilon + \delta),
\end{align*}
we have 
\begin{align*}
\psi(\operatorname{CF}(\{ x \}, \varepsilon + \delta)) \oplus (- \varepsilon) \odot \psi(\operatorname{CF}(\{ y_i \}, \delta)) = \psi(\operatorname{CF}(\{ x \}, \varepsilon + \delta)).
\end{align*}
By Claim \ref{cl6} and Lemma \ref{lem7}, the slope of $\psi(\operatorname{CF}(\{ x \}, \varepsilon + \delta))$ (resp. $\psi(\operatorname{CF}(\{ y_i \}, \delta))$) on each connected component of $\{ x^{\prime} \in \Gamma_2 \,|\, -\varepsilon < \psi(\operatorname{CF}(\{ x \}, \varepsilon + \delta))(x^{\prime}) < 0 \}$ (resp. $\{ x^{\prime} \in \Gamma_2 \,|\, - \delta < \psi(\operatorname{CF}(\{ y_i \}, \delta))(x^{\prime}) < 0 \}$) coincides with the expansion factor of $\varphi$ on the connected component.
Hence there exists $z^{\prime} \in \Gamma_2$ such that $\psi(\operatorname{CF}(\{ y_i \}, \delta))(z^{\prime}) = -\operatorname{dist}(y_i, \varphi(z^{\prime})) > -\delta$ and $-\varepsilon + \psi(\operatorname{CF}(\{ y_i \}, \delta))(z^{\prime}) < \psi(\operatorname{CF}(\{ x \}, \varepsilon + \delta))(z^{\prime}) \le -\varepsilon$ for $i = 1$ or $2$.
Without loss of generality, we can assume that this $i$ is one.
Note that in this case $\psi(\operatorname{CF}(\{ y_2 \}, \delta))(z^{\prime}) = - \delta$.
For any positive integer $n$, since
\begin{align*}
&~ \left( \operatorname{CF}(\{ x \}, \varepsilon + \delta)^{\odot(-1)} \oplus \varepsilon \right)^{\odot(-1)}\\
=&~ \left( \operatorname{CF}(\{ x \}, \varepsilon + \delta)^{\odot(-n)} \oplus n\varepsilon \right)^{\odot(-1)} \odot (n-1)\varepsilon\\
&~ \oplus (-\varepsilon) \odot \operatorname{CF}(\{ y_1 \}, \delta) \oplus (-\varepsilon) \odot \operatorname{CF}(\{ y_2 \}, \delta),
\end{align*}
we have
\begin{align*}
&~ \left( \psi(\operatorname{CF}(\{ x \}, \varepsilon + \delta))^{\odot(-1)} \oplus \varepsilon \right)^{\odot(-1)}\\
=&~ \left( \psi(\operatorname{CF}(\{ x \}, \varepsilon + \delta))^{\odot(-n)} \oplus n\varepsilon \right)^{\odot(-1)} \odot (n-1)\varepsilon\\
&~ \oplus (-\varepsilon) \odot \psi(\operatorname{CF}(\{ y_1 \}, \delta)) \oplus (-\varepsilon) \odot \psi(\operatorname{CF}(\{ y_2 \}, \delta)).
\end{align*}
If $\psi(\operatorname{CF}(\{ x \}, \varepsilon + \delta))(z^{\prime}) < - \varepsilon$, then the above equality cannot hold since the left-hand side takes the value $\psi(\operatorname{CF}(\{ x \}, \varepsilon + \delta))(z^{\prime})$ at $z^{\prime}$ and the right-hand side is less than it at $z^{\prime}$.
Thus $\psi(\operatorname{CF}(\{ x \}, \varepsilon + \delta))(z^{\prime}) = - \varepsilon$.
By the above argument, for $w^{\prime} \in \Gamma_2$, if $\psi(\operatorname{CF}(\{ y_k \}, \delta))(w^{\prime}) > -\delta$ and $-\varepsilon + \psi(\operatorname{CF}(\{ y_k \}, \delta))(w^{\prime}) < \psi(\operatorname{CF}(\{ x \}, \varepsilon + \delta))(w^{\prime}) \le -\varepsilon$, then $\psi(\operatorname{CF}(\{ x \}, \varepsilon + \delta))(w^{\prime}) = -\varepsilon$.
Since $\delta$ is small and rational functions on tropical curves are continuous, there exists $w^{\prime} \in \Gamma_2$ such that $\psi(\operatorname{CF}(\{ y_1 \}, \delta))(w^{\prime}) = \psi(\operatorname{CF}(\{ y_2 \}, \delta))(w^{\prime}) = - \delta$ and $-\varepsilon - \frac{\delta}{n} < \psi(\operatorname{CF}(\{ x \}, \varepsilon + \delta))(w^{\prime}) < -\varepsilon$.
However, by the above equality, such $w^{\prime}$ cannot exist.
Therefore we have the conclusion.
\end{proof}

\begin{cor}
	\label{cor5}
Let $\psi$ and $\varphi$ be as in Theorem \ref{thm1}.
Then 
\begin{align*}
\psi (f) = f \circ \varphi
\end{align*}
holds for any $f \in \operatorname{Rat}(\Gamma_1)$.
\end{cor}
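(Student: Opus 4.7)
The plan is to reduce the identity to evaluating $\psi(f)$ at finite points of $\Gamma_2$, where a tropical squeeze using a single chip firing move of the form $\operatorname{CF}(\{ x \}, \infty)$ collapses the value of $\psi(f)$ to $f(\varphi(x^{\prime}))$, and then recover the case $x^{\prime} \in \Gamma_{2\infty}$ by continuity. The trivial case $f = -\infty$ is immediate because any $\boldsymbol{T}$-algebra homomorphism preserves the absorbing zero of the semifield.

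For $x^{\prime} \in \Gamma_2 \setminus \Gamma_{2\infty}$, Claim \ref{cl7} forces $x := \varphi(x^{\prime}) \in \Gamma_1 \setminus \Gamma_{1\infty}$, so $c := f(x) \in \boldsymbol{R}$. Since $f$ is piecewise affine with integer slopes and finitely many pieces, let $N$ be the maximum of the absolute values of its slopes. With $h := \operatorname{CF}(\{ x \}, \infty) \in \operatorname{Rat}(\Gamma_1)$, the global Lipschitz inequality $|f(z) - c| \leq N \operatorname{dist}(x, z)$, interpreted consistently at points at infinity, becomes
\[
c \odot h^{\odot N} \leq f \leq c \odot h^{\odot (-N)}
\]
in $\operatorname{Rat}(\Gamma_1)$. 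Applying $\psi$, which fixes the constant $c$, commutes with $\odot$ and $\odot (-1)$, and preserves the order $\leq$ (since $\leq$ is encoded by $\oplus$), yields the same chain with $\psi(f)$ and $\psi(h)$ in $\operatorname{Rat}(\Gamma_2)$. Evaluating at $x^{\prime}$ and using $\psi(h)(x^{\prime}) = 0$, which is exactly the defining property of $x^{\prime} \in \operatorname{Max}_x^{\prime} = \varphi^{-1}(x)$, collapses both outer terms to $c$, so $\psi(f)(x^{\prime}) = c = f(\varphi(x^{\prime}))$.

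For $x^{\prime} \in \Gamma_{2\infty}$, I would pick a sequence $\{ x_n^{\prime} \} \subset \Gamma_2 \setminus \Gamma_{2\infty}$ along the unique edge incident to $x^{\prime}$ with $x_n^{\prime} \to x^{\prime}$. The previous paragraph gives $\psi(f)(x_n^{\prime}) = f(\varphi(x_n^{\prime}))$ for every $n$, and since $\psi(f)$, $\varphi$, and $f$ are continuous as maps into $\boldsymbol{R} \cup \{ \pm \infty \}$, passing to the limit yields $\psi(f)(x^{\prime}) = f(\varphi(x^{\prime}))$. As $x^{\prime} \in \Gamma_2$ was arbitrary, $\psi(f) = f \circ \varphi$.

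The main obstacle will be verifying the global validity of the Lipschitz squeeze on all of $\Gamma_1$, including at points of $\Gamma_{1\infty}$ where both sides can take the values $\pm \infty$ and where $f$ itself may blow up. Once the conventions at infinity are pinned down, so that the inequality reduces to the trivial bound $-\infty \leq f(z) \leq +\infty$ on $\Gamma_{1\infty}$, the rest of the argument is only a careful invocation of the already established properties of $\psi$ and of the construction of $\varphi$ from the Claims preceding this corollary.
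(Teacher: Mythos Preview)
Your argument is correct and is genuinely different from the paper's proof. The paper verifies $\psi(f)=f\circ\varphi$ only on an explicit generating set of $\operatorname{Rat}(\Gamma_1)$, namely certain chip firing moves $\operatorname{CF}(\{x\},\varepsilon)$ and $\operatorname{CF}(\Gamma_1\setminus(z,y],\infty)^{\odot(-1)}$, relying on Claims \ref{cl6}, \ref{cl12}, \ref{cl13} together with the auxiliary Lemmas \ref{lem7} and \ref{lemma9} for the pointwise identification, and then invokes the finite-generation theorem of \cite{JuAe3} to propagate the identity to all of $\operatorname{Rat}(\Gamma_1)$ via the homomorphism property. Your route bypasses all of this: the single Lipschitz sandwich $c\odot h^{\odot N}\le f\le c\odot h^{\odot(-N)}$ with $h=\operatorname{CF}(\{x\},\infty)$, combined with the defining equality $\psi(h)(x')=0$ for $x'\in\operatorname{Max}'_x$, pins down $\psi(f)(x')$ directly for arbitrary $f$, using only order-preservation of $\psi$ and the construction of $\varphi$. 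This is shorter, self-contained (no appeal to \cite{JuAe3}), and does not require the technical Lemmas \ref{lem7} and \ref{lemma9}. The paper's approach, on the other hand, exhibits the compatibility of $\psi$ with the explicit generating set, which is of independent structural interest and is what is actually needed to establish the functoriality in Corollary \ref{cor1}. Your concern about the inequalities at $\Gamma_{1\infty}$ is harmless: for $N\ge 1$ the bounds there degenerate to $-\infty\le f(z)\le\infty$, and for $N=0$ the function is constant and the claim is trivial.
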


In the proof of Corollary \ref{cor5}, we will use the proof of the main theorem of \cite{JuAe3} ``for a tropical curve $\Gamma$, $\operatorname{Rat}(\Gamma)$ is finitely generated as a semifield over $\boldsymbol{T}$";
in that proof, the author gave a finite generating set of $\operatorname{Rat}(\Gamma)$ as follows.
Let $\Gamma^{\prime}$ be a metric graph that is obtained from $\Gamma$ by contracting all edges of length $\infty$.
For the canonical model $(G^{\prime}_{\circ}, l^{\prime}_{\circ})$ for $\Gamma^{\prime}$, fix a direction on edges of $G^{\prime}_{\circ}$.
We identify each edge $e^{\prime} \in E(G^{\prime}_{\circ})$ with the interval $[0, l^{\prime}_{\circ}(e)]$ with this direction.
For each edge $e^{\prime} \in E(G^{\prime}_{\circ})$, let $x_{e^{\prime}} = \frac{l^{\prime}_{\circ}(e^{\prime})}{4}$, $y_{e^{\prime}} = \frac{l^{\prime}_{\circ}(e^{\prime})}{2}$, and $z_{e^{\prime}} = \frac{3l^{\prime}_{\circ}(e^{\prime})}{4}$.
Let 
\begin{align*}
f^{\prime}_e := \operatorname{CF}\left( \{ y_{e^{\prime}} \}, \frac{l^{\prime}_{\circ}(e^{\prime})}{2} \right), g^{\prime}_e := \operatorname{CF}\left( \{ x_{e^{\prime}} \}, \frac{l^{\prime}_{\circ}(e^{\prime})}{4} \right), h^{\prime}_e := \operatorname{CF}\left( \{ z_{e^{\prime}} \}, \frac{l^{\prime}_{\circ}(e^{\prime})}{4} \right).
\end{align*}
The natural inclusion $\iota : \Gamma^{\prime} \hookrightarrow \Gamma$ induces the natural inclusion $\kappa : \operatorname{Rat}(\Gamma^{\prime}) \hookrightarrow \operatorname{Rat}(\Gamma)$ such that for any $f^{\prime} \in \operatorname{Rat}(\Gamma^{\prime})$ and $x^{\prime} \in \Gamma^{\prime}$, $\kappa(f^{\prime})(\iota(x^{\prime})) = f^{\prime}(x^{\prime})$ and $\kappa(f^{\prime})$ is extended to be constant on each connected component of $\Gamma \setminus \iota(\Gamma^{\prime})$.
Let $L_1, \ldots, L_m$ be all the connected components of $\Gamma \setminus \iota(\Gamma^{\prime})$.
Then $\{ \kappa(f_{e^{\prime}}), \kappa(g_{e^{\prime}}), \kappa(h_{e^{\prime}}), \kappa(\operatorname{CF}(\{ v^{\prime} \}, \infty)), \operatorname{CF}(\Gamma \setminus L_1, \infty), \ldots, \operatorname{CF}(\Gamma \setminus L_m, \infty) \,|\, e^{\prime} \in E(G^{\prime}_{\circ}), v^{\prime} \in V(G^{\prime}_{\circ}) \}$ is the desired generating set (see \cite[Section 1 and the proof of Lemma 1.4]{JuAe3}).
Note that we chose the canonical model for $\Gamma^{\prime}$ so that the generating set is as small as possible, but the same proof holds for any model for $\Gamma^{\prime}$.

\begin{proof}[Proof of Corollary \ref{cor5}]
Let $x \in \Gamma_1 \setminus \Gamma_{1\infty}$ and $\varepsilon > 0$ be such that $\psi(\operatorname{CF}(\{ x \}, \varepsilon))$ satisfies all of the conditions in Claim \ref{cl3}.
Let $y \in \Gamma_{1\infty}$ and $z$ be a finite point on the unique edge incident to $y$ such that $\psi(\operatorname{CF}(\Gamma_1 \setminus (z, y], \infty))^{\odot(-1)}$ satisfies all of the conditions in Claim \ref{cl8}.
Let $f := \operatorname{CF}(\{ x \}, \varepsilon)$ and $g := \operatorname{CF}(\Gamma_1 \setminus (z, y], \infty)^{\odot(-1)}$.
By Lemma \ref{lem7} and Claims \ref{cl6}, \ref{cl12}, \ref{cl13},  and the definition of $\varphi$, we have
\begin{align*}
\psi(f) = f \circ \varphi \quad \text{and} \quad \psi(g) = g \circ \varphi.
\end{align*}
Since all of such $f$ and $g$ generate $\operatorname{Rat}(\Gamma_1)$ as a tropical semifield over $\boldsymbol{T}$, for any $h \in \operatorname{Rat}(\Gamma_1)$, we have $\psi(h) = h \circ \varphi$.
In fact, if $\psi(\widetilde{f}) = \widetilde{f} \circ \varphi$ and $\psi(\widetilde{g}) = \widetilde{g} \circ \varphi$, then we have
\begin{align*}
\psi(\widetilde{f} \oplus \widetilde{g}) =&~ \psi(\widetilde{f}) \oplus \psi(\widetilde{g}) \\
=&~ \widetilde{f} \circ \varphi \oplus \widetilde{g} \circ \varphi\\
=&~ (\widetilde{f} \oplus \widetilde{g}) \circ \varphi
\end{align*}
and
\begin{align*}
\psi(\widetilde{f} \odot \widetilde{g}) =&~ \psi(\widetilde{f}) \odot \psi(\widetilde{g}) \\
=&~ \widetilde{f} \circ \varphi \odot \widetilde{g} \circ \varphi\\
=&~ (\widetilde{f} \odot \widetilde{g}) \circ \varphi.
\end{align*}
For each $y \in \Gamma_{1\infty}$, fix $z$ as above.
Let $\Gamma_{11}$ be the metric graph obtained from $\Gamma_1$ by contracting all edges of length $\infty$ to each $z$ and $\kappa : \operatorname{Rat}(\Gamma_{11}) \hookrightarrow \operatorname{Rat}(\Gamma_1)$ as above.
By lemma \ref{lemma9}, for a model $(G_1, l_1)$ for $\Gamma_{11}$ such that $V(G_1)$ contains $V \setminus \Gamma_{1\infty}$, where $V$ is as in the proof of Claim \ref{cl15}, and any edge $e \in E(G_1)$, we can choose $\kappa(f_e)$, $\kappa(g_e)$, $\kappa(h_e)$ as $f$ above.
By \cite[Remark 3.1]{JuAe2} (cf. \cite[Lemma 3.3]{JuAe2}), $\operatorname{CF}(\{ v \}, \infty)$ is contained in the semifield generated by $\{ f_e, g_e, h_e, \operatorname{CF}(\{ w \}, \varepsilon_w) \,|\, e \in E(G_1), w \in V(G_1) \}$ over $\boldsymbol{T}$ on $\Gamma_{11}$, where $\varepsilon_w$ is a positive number such that $\psi(\operatorname{CF}(\{ w \}, \varepsilon_w))$ satisfies all of the conditions in Claim \ref{cl5}.
Hence, we have the conclusion.
\end{proof}

Now we can prove Corollary \ref{cor1}:

\begin{proof}[Proof of Corollary \ref{cor1}]
Clearly, both $\mathscr{C}, \mathscr{D}$ are categories.

Let 
\begin{align*}
F: \mathscr{C} \to \mathscr{D}
\end{align*}
be
\begin{align*}
\operatorname{Ob}(\mathscr{C}) \to \operatorname{Ob}(\mathscr{D}); \qquad \Gamma \mapsto \Gamma
\end{align*}
and for $\Gamma_1, \Gamma_2 \in \operatorname{Ob}(\mathscr{C})$, 
\begin{align*}
\operatorname{Hom}_{\mathscr{C}}(\Gamma_1, \Gamma_2) \to \operatorname{Hom}_{\mathscr{D}}(\Gamma_2, \Gamma_1); \qquad \psi \mapsto \varphi,
\end{align*}
where $\varphi$ is the surjective morphism $\Gamma_2 \twoheadrightarrow \Gamma_1$ defined as in Theorem \ref{thm1}.
Let 
\begin{align*}
G: \mathscr{D} \to \mathscr{C}
\end{align*}
be
\begin{align*}
\operatorname{Ob}(\mathscr{D}) \to \operatorname{Ob}(\mathscr{C}); \qquad \Gamma \to \Gamma
\end{align*}
and for $\Gamma_1, \Gamma_2 \in \operatorname{Ob}(\mathscr{D})$, 
\begin{align*}
\operatorname{Hom}_{\mathscr{D}}(\Gamma_1, \Gamma_2) \to \operatorname{Hom}_{\mathscr{C}}(\Gamma_2, \Gamma_1); \qquad \varphi \mapsto \varphi^{\ast}.
\end{align*}
By Theorem \ref{thm1} and Proposition \ref{prop}, $F$ and $G$ are defined.
Then $F$ and $G$ are (contravariant) functors.
By Corollary \ref{cor5}, we have $G \circ F = \operatorname{id}_{\mathscr{C}}$.
Clearly, $F \circ G = \operatorname{id}_{\mathscr{D}}$ holds.
\end{proof}

\end{document}